\patchcmd\Gread@eps{\@inputcheck#1 }{\@inputcheck"#1"\relax}{}{}
\newtheorem{lemma}{Lemma}[section]
\newtheorem{theorem}[lemma]{Theorem}
\newtheorem{corollary}[lemma]{Corollary}
\newtheorem{proposition}[lemma]{Proposition}
\newtheorem{example}[lemma]{Example}
\theoremstyle{definition}
\newtheorem{examplenotitalic}[lemma]{Example}
\newenvironment{proofofthm4.3}{\textit{Proof of Theorem 4.3.}\/}{\hfill$\Box$}
\newcommand{\cI}{\mathcal{I}}
\newcommand{\cO}{\mathcal{O}}
\renewcommand{\P}{\mathbb{P}}
\DeclareMathOperator{\Aut}{Aut}
\DeclareMathOperator{\Hom}{Hom}
\DeclareMathOperator{\rk}{rk}
\DeclareMathOperator{\sHom}{\mathcal{H}\kern -.5pt\mathit{om}}
\DeclareMathOperator{\sTor}{\mathcal{T}\kern -1.5pt\mathit{or}}
\DeclareMathOperator{\codim}{codim}
\DeclareMathOperator{\sEnd}{\mathcal{E}\kern -.5pt\mathit{nd}}
\DeclareMathOperator{\sExt}{\mathcal{E}\kern -.5pt\mathit{xt}}
\title{Normal Bundles of Rational Normal Curves on Hypersurfaces}
\author{Lucas Mioranci}
\address{Department of Mathematics, Statistics, and Computer Science, University of Illinois at Chicago, Chicago, IL}
\email{lmiora2@uic.edu}
\date{}
\subjclass[2020]{Primary: 14H60, 14J70; Secondary: 14C05, 14G17, 14N25}
\begin{document}

\maketitle

\begin{abstract}

    Let $C$ be the rational normal curve of degree $e$ in $\P^n$, and let $X\subset \P^n$ be a degree $d\ge 2$ hypersurface containing $C$. I. Coskun and E. Riedl show that the normal bundle $N_{C/X}$ is balanced for a general $X$. H. Larson studies the case of lines ($e=1$) and computes the dimension of the space of hypersurfaces for which $N_{C/X}$ has a given splitting type. In this paper, we work with any $e\ge 2$. We compute explicit examples of hypersurfaces for all possible splitting types, and for $d\ge 3$, we compute the dimension of the space of hypersurfaces for which $N_{C/X}$ has a given splitting type. For $d=2$, we give a lower bound on the maximum rank of quadrics $X$ such that $N_{C/X}$ has a fixed splitting type.
    
\end{abstract}

\section{Introduction}

Rational curves play an important role in the study of birational and arithmetic geometry of projective varieties. The local structure of the space of rational curves on a variety is determined by its normal bundle. In this paper, we study the possible splitting types of the normal bundle of rational normal curves on a hypersurface in $\P^n$ and obtain explicit examples of hypersurfaces for each splitting type. Unless otherwise specified, we work over an algebraically closed field $K$ of arbitrary characteristic $p$.

Let $X$ be a degree $d$ hypersurface in $\P^n$ containing a smooth rational curve $C$. By the Birkhoff-Grothendieck theorem, the normal bundle of $C$ on $X$, $N_{C/X}$, splits as a direct sum $N_{C/X}\cong \bigoplus_{i=1}^{n-2}\cO_{\P^1}(a_i)$. The collection of integers $a_i$ is called the \textit{splitting type} of $N_{C/X}$. We say a splitting type is \textit{balanced} when $|a_i-a_j|\le 1$ for all $i$ and $j$. We are interested in studying the possible splitting types when $C$ is the rational normal curve of degree $e$ in $\P^n$. Additionally, given a splitting type $\vec{a}$, let $E_{\vec{a}}=\bigoplus_{i=1}^{n-2}\cO(a_i)$. We study the space of hypersurfaces $X$ containing $C$ and for which $N_{C/X}\cong E_{\vec{a}}$. Let $X$ be given by a degree $d$ polynomial $F$ as $X = V(F)$, and define
\[ \Sigma = \{ F \ | \ X \text{ is a degree } d \text{ hypersurface smooth along } C \}\subset H^0(\cO_{\P^n}(d)) \]
and
\[ \Sigma_{\vec{a}} = \{ F\in \Sigma \ | \ N_{C/X}\cong E_{\vec{a}} \}\subset \Sigma . \]
The codimension of the locus of vector bundles on $\P^1$ with a specified splitting type $E_{\vec{a}}$ in the versal deformation space is given by $h^1(\P^1, \sEnd(E_{\vec{a}}))$ (see \cite{C08} Lemma 2.4 and \cite{CR18}). We call it the \textit{expected codimension} for $\Sigma_{\vec{a}}$ in $\Sigma$ and observe that
\[ h^1(\sEnd(E_{\vec{a}})) = h^1(E_{\vec{a}}^*\otimes E_{\vec{a}}) = \sum_{\{i,j | a_i-a_j\le -2\}}(a_j-a_i-1). \]

Normal bundles of rational curves have been studied in many different works (see \cite{Sa80}, \cite{GS80}, \cite{EV81}, \cite{EV82}, \cite{M86}, \cite{Ran}, \cite{AlzatiRe}, \cite{AlzatiReTortora}, \cite{CR18}, \cite{Ran20}, \cite{Ran22}). In \cite{CR18}, I. Coskun and E. Riedl show that the locus of rational curves $C$ in $\P^n$ whose normal bundle $N_{C/\P^n}$ has a given splitting type has arbitrarily many components, and the difference between the expected dimension and actual dimension of a component can grow arbitrarily large as the degree of the curve increases.

For rational curves on hypersurfaces, H. Larson (\cite{L21}, proof of Theorem 1.1) studies the case of lines and shows that $\Sigma_{\vec{a}}$ is smooth of the expected codimension. For rational normal curves of degree $e\ge 2$, I. Coskun and E. Riedl prove (\cite{CR} Corollary 3.8) that the normal bundle on a degree $d\ge 2$ general hypersurface is balanced. Here, we examine more closely the case of rational normal curves on hypersurfaces, and find explicit examples of hypersurfaces $X$ for each possible splitting type of $N_{C/X}$.

In section 2, we describe the normal bundle as the kernel in the sequence
\[ 0\longrightarrow N_{C/X}\longrightarrow \cO(e+2)^{e-1}\oplus \cO(e)^{n-e}\overset{\psi_F}{\longrightarrow} \cO(de)\longrightarrow 0, \]
so the splitting type of $N_{C/X}$ must have the form of a rank $(n-2)$ direct sum
\[ N_{C/X}\cong \left(\bigoplus_{i=1}^{e-1}\cO(e+2-a_i)\right )\oplus \left (\bigoplus_{j=e+2}^{n}\cO(e-b_j) \right ), \]
with $a_i, b_j\ge 0$, and degree $\deg N_{C/X} = \deg N_{C/\P^n} - \deg \cO(de) = e(n-d+1)-2$, equivalently $\sum_{i=1}^{e-1}a_i + \sum_{j=e+2}^n b_j = e(d-1)$. We ask which of these splitting types are achieved as the normal bundle of a hypersurface $X$ smooth along $C$. In addition, we find explicit examples of $X$ for each possible splitting type. We first examine the case $d\ge 3$:

\begin{theorem}\label{existencetheorem} (cf. Theorem \ref{existence})
Let $e\le n$, $d\ge 3$, and let $C$ be the rational normal curve of degree $e$ in $\P^n$. Then:
\leavevmode
\begin{enumerate}[label=(\alph*)]
    \item For all splitting types with at most $e-2$ summands of degrees greater than $e$, that is, of the form
    \[ E = \left(\bigoplus_{i=1}^{e-2}\cO(e+2-a_i)\right )\oplus \left (\bigoplus_{j=e+1}^{n}\cO(e-b_j) \right ), \]
    with $a_i, b_j\ge 0$ and $\sum_{i=1}^{e-2}a_i + \sum_{j=e+1}^n b_j = e(d-1)-2$, we obtain explicit  examples of degree $d$ hypersurfaces $X$, smooth along the curve $C$, with normal bundle $N_{C/X}\cong E$.  
\newline
    \item If $N_{C/X}$ is not of the form $E$ above, then $e<n-1$, and the normal bundle must have the form
    \[ E' = \cO(e+2)^{e-1}\oplus \left (\bigoplus_{j=e+2}^{n}\cO(e-b_j) \right ), \]
    with $b_j\ge 0$ and $\sum_{j=e+2}^n b_j = e(d-1)$.
    For each splitting type $E'$, we find explicit examples of degree $d$ hypersurfaces $X$, smooth along $C$, with normal bundle $N_{C/X}\cong E'$.
\end{enumerate}
\end{theorem}

When $d\ge 4$, Bertini's Theorem implies that the general example is smooth.
\begin{corollary}\label{existsmooth} (cf. Corollary \ref{smoothexample})
(char $K=0$) Let $d\ge 4$, and assume the base field $K$ has characteristic $0$. For all splitting types $E$ and $E'$ as in the theorem, there exists a smooth hypersurface $X$ of degree $d$ containing the curve $C$ with normal bundle $N_{C/X}\cong E$.
\end{corollary}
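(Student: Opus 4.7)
The plan is to take the explicit $F_0 \in \Sigma_{\vec{a}}$ produced by Theorem~\ref{existencetheorem} and deform it by a general element of $H^0(\cI_C^2(d))$, then invoke Bertini in characteristic zero to upgrade smoothness along $C$ to global smoothness.

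The key first observation is that the map $\psi_F$ appearing in the exact sequence of Section~2 is built from the normal derivative of $F$ along $C$ and hence depends only on the class of $F$ in $\cI_C(d)/\cI_C^2(d)$. Consequently, for any $G \in H^0(\cI_C^2(d))$, setting $F := F_0 + G$ yields $\psi_F = \psi_{F_0}$, so $N_{C/V(F)} \cong N_{C/X_0} \cong E$. Moreover, smoothness along $C$ persists: at any $p \in C$ one has $dG|_p = 0$ (a formal consequence of $G \in \cI_C^2$), hence $dF|_p = dF_0|_p \neq 0$.

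Next I would apply Bertini in characteristic zero to the linear system spanned by $F_0$ and $H^0(\cI_C^2(d))$ on $\P^n$: a general member defines a hypersurface smooth away from the base locus of this system, which set-theoretically is contained in $V(F_0) \cap \Bs |\cI_C^2(d)|$. Combined with the preceding paragraph, it suffices to verify $\Bs |\cI_C^2(d)| \subseteq C$ as a set, whence a general choice of $G$ produces a smooth $X = V(F_0+G) \in \Sigma_{\vec{a}}$.

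For the base locus claim, the key input is that the rational normal curve $C$ is cut out scheme-theoretically by quadrics when $e \geq 2$ (and by linear forms when $e = 1$): for any $p \notin C$ there is some $Q \in H^0(\cI_C(2))$ with $Q(p) \neq 0$, and then $Q^2$ times a degree $d-4$ polynomial not vanishing at $p$ gives a section of $\cI_C^2(d)$ nonzero at $p$. This is precisely where the hypothesis $d \geq 4$ is needed (for $d=3$ and $n>e$, the sections of $\cI_C^2(d)$ all vanish on the linear span of $C$, and the strategy fails). The main obstacle is therefore this base-locus estimate; the normal-bundle preservation and smoothness along $C$ are formal from the Section~2 description of $\psi_F$, and Bertini applies directly in characteristic zero.
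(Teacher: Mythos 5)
Your proposal is correct and follows essentially the same route as the paper: both identify the fiber of $\phi$ over $\psi_{F_0}$ with the coset $F_0 + H^0(\cI_C^2(d))$, observe that the base locus of this system is $C$ because squares of the quadric generators (times degree $d-4$ forms) lie in $H^0(\cI_C^2(d))$ when $d\ge 4$, and conclude by Bertini in characteristic zero together with smoothness of $F_0$ along $C$. Your explicit remarks on why $dG|_p=0$ preserves smoothness along $C$ and why the argument breaks for $d=3$, $e<n$ are correct elaborations of steps the paper leaves implicit.
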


In addition, for $d\ge 3$, we compute the dimension of $\Sigma_{\vec{a}}$ and show that, when $e=n$ or the splitting type does not have terms of degree $e+2$, we get the expected codimension in $\Sigma$. When the dimension is not the expected one, we can compute the difference to its actual dimension. In particular, we show the difference can get arbitrarily large as $n$ grows.

\begin{theorem} (cf. Theorem \ref{teoremasigmaa})
Let $e\le n$ and $d\ge 3$. Given a splitting type $E_{\vec{a}}$ as in Theorem \ref{existencetheorem}, the locus $\Sigma_{\vec{a}}$ is irreducible and smooth of codimension $h^1(\sEnd(E_{\vec{a}})) - h^1(\sHom(E_{\vec{a}}, N_{C/\P^n}))$ in $\Sigma$.
\end{theorem}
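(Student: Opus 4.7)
The plan is to reduce the statement from $\Sigma$ to an analogous locus $U_{\vec a}$ inside the space of surjections $\Hom(N_{C/\P^n}, \cO(de))$, using the map $\phi$ of \cite{CR}. Since $\phi\colon H^0(\cI_C(d)) \to \Hom(N_{C/\P^n}, \cO(de))$ is surjective for $d \ge 3$ and $\Sigma$ is the preimage of the open set $\Hom^{\mathrm{surj}}(N_{C/\P^n}, \cO(de))$ of surjections, the restriction $\phi|_{\Sigma}$ is a surjective affine-linear map with constant-dimensional fibers (translates of $\ker \phi$). Letting $U_{\vec a}$ be the locus of surjections whose kernel is isomorphic to $E_{\vec a}$, one has $\Sigma_{\vec a} = \phi^{-1}(U_{\vec a})$, and so irreducibility, smoothness, and codimension transfer from $U_{\vec a}$ to $\Sigma_{\vec a}$.

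To describe $U_{\vec a}$, I would use the open subvariety $W_{\vec a} \subset \Hom(E_{\vec a}, N_{C/\P^n})$ of sheaf injections with locally free cokernel, which is nonempty by Theorem \ref{existencetheorem}. By the rank and degree count, every such cokernel is isomorphic to $\cO(de)$. Sending $\iota$ to the subbundle $\iota(E_{\vec a}) \subset N_{C/\P^n}$ identifies the set of subbundles of $N_{C/\P^n}$ isomorphic to $E_{\vec a}$ with the quotient $W_{\vec a}/\Aut(E_{\vec a})$, where the action by precomposition is free by injectivity of $\iota$. Since $\Aut(E_{\vec a})$ is a smooth connected linear algebraic group acting freely on the smooth variety $W_{\vec a}$, this quotient is smooth and irreducible. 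Recovering a surjection $N_{C/\P^n} \to \cO(de)$ from its kernel subbundle requires the additional choice of an isomorphism of the quotient with $\cO(de)$, i.e., an element of $K^{\ast} = \Aut(\cO(de))$; so $U_{\vec a}$ is a $K^{\ast}$-torsor over $W_{\vec a}/\Aut(E_{\vec a})$, hence smooth and irreducible of dimension $h^0(\sHom(E_{\vec a}, N_{C/\P^n})) - h^0(\sEnd(E_{\vec a})) + 1$.

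The final step is cohomological bookkeeping on the exact sequence $0 \to E_{\vec a} \to N_{C/\P^n} \to \cO(de) \to 0$. Applying $\Hom(-,\cO(de))$ and using $\Ext^1(\cO(de), \cO(de)) = H^1(\cO_{\P^1}) = 0$ gives $h^0(\sHom(N_{C/\P^n}, \cO(de))) = 1 + h^0(\sHom(E_{\vec a}, \cO(de)))$. Applying $\Hom(E_{\vec a}, -)$ and taking Euler characteristics, together with the vanishing $h^1(\sHom(E_{\vec a}, \cO(de))) = 0$ (every summand of $E_{\vec a}^{\ast}\otimes \cO(de)$ has degree at least $(d-1)e-2 \ge 0$), expresses $h^0(\sHom(E_{\vec a}, \cO(de)))$ in terms of the $h^0$ and $h^1$ of $\sEnd(E_{\vec a})$ and $\sHom(E_{\vec a}, N_{C/\P^n})$. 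Substituting both identities into
\[ \codim_{\Hom(N_{C/\P^n},\cO(de))} U_{\vec a} = h^0(\sHom(N_{C/\P^n}, \cO(de))) - \dim U_{\vec a} \]
yields the claimed $h^1(\sEnd(E_{\vec a})) - h^1(\sHom(E_{\vec a}, N_{C/\P^n}))$. The principal care point is to establish $W_{\vec a}/\Aut(E_{\vec a})$ as a smooth geometric quotient of the expected dimension; freeness of the $\Aut(E_{\vec a})$-action is immediate and existence of the quotient is standard for free actions of linear algebraic groups on smooth varieties, after which everything reduces to this clean cohomology identity.
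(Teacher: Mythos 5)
Your proposal is correct and follows essentially the same route as the paper: the reduction of $\Sigma_{\vec a}$ to the locus $\Phi_{\vec a}$ of surjections via the surjectivity of $\phi$, the dimension count $h^0(\sHom(E_{\vec a},N_{C/\P^n}))-h^0(\End(E_{\vec a}))+h^0(\End(\cO(de)))$ for that locus, and the two long exact sequences with the vanishing of $\sExt^1(E_{\vec a},\cO(de))$ all appear in Proposition \ref{lemmaHom} and Theorem \ref{teoremasigmaa}. The only cosmetic difference is that you package the count as a free $\Aut(E_{\vec a})$-quotient and a $K^{*}$-torsor, whereas the paper uses the equivalent incidence correspondence with its two projections.
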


\begin{corollary} (cf. Corollary \ref{difference}) Let $z = |\{ i \ | \ a_i = e+2 \}|$ be the number of terms $\cO(e+2)$ in the splitting type $E_{\vec{a}}$. Then the codimension of $\Sigma_{\vec{a}}$ in $\Sigma$ is $h^1(\sEnd(E_{\vec{a}})) - (n-e)z$.
\end{corollary}

The case $d=2$ presents additional difficulties, as for example, the map $\phi : H^0(\cI_C(2))\to \Hom (N_{C/\P^n}, \cO_{\P^1}(2e))$ is not surjective, so it is not clear which maps $\psi_F$ are induced by hypersurfaces $X$. Nonetheless, we can adapt the computation from the case $d\ge 3$, and find explicit examples of quadrics for every possible splitting type.

\begin{theorem}\label{quadricsexamples} (cf. Theorem \ref{quadricprop1})
Let $e\le n$, $d=2$, and let $C$ be the rational normal curve of degree $e$ in $\P^n$. Then:
\leavevmode
\begin{enumerate}[label=(\alph*)]
    \item For any given splitting type of the form
    \[ E = \left ( \bigoplus_{i=1}^{e-2}\cO(e+2-a_i) \right )\oplus \left ( \bigoplus _{j=e+1}^{n}\cO(e-b_j)\right ), \] with $a_i, b_j\ge 0$ and $\sum_{i=1}^{e-2}a_i + \sum_{j=e+1}^nb_j = e-2$, we produce a quadric $X=V(F)$, smooth along $C$, for which $N_{C/X}\cong E$.

    More explicitly, rearrange the $a_i$ in non-decreasing order $0 = a_y < a_{y+1}\le \cdots \le a_{e-2}$, and let $\beta_0 = 0$ and $\beta_i = a_{y+1} + \cdots + a_{y+i}$ for $i=1, \ldots , e-2-y$:
    
    For $e=n$, the quadric $X$ given by the polynomial
    \[ F = \sum_{i=0}^{n-2-y}Q_{\beta_i+1,\beta_i+2} =  Q_{1,2} + Q_{\beta_1 + 1, \beta_1 +2} + Q_{\beta_2 + 1, \beta_2 + 2} +  \cdots + Q_{\beta_{n-3-y}+1, \beta_{n-3-y}+2} + Q_{n-1,n}, \]
    where $Q_{ij} = x_ix_{j-1}-x_{i-1}x_j$, has normal bundle $N_{C/X}\cong E$.
    
    For $e<n$, let also $\gamma_n = e$ and $\gamma_{j} = e - b_n - b_{n-1} - \cdots - b_{j+1}$ for $e+1\le j\le n-1$. Then a quadric $X$ such that $N_{C/X}\cong E$ is given by
    \begin{align*}
        F = & \sum_{i=0}^{e-2-y}Q_{\beta_i+1,\beta_i+2} + \sum_{j=e+1}^n x_{\gamma_j}x_j\\
        = & (Q_{1,2} + Q_{\beta_1 + 1, \beta_1 +2} + \cdots + Q_{\beta_{e-2-y}+1, \beta_{e-2-y}+2}) + (x_{\gamma_{e+1}}x_{e+1} + \cdots + x_{\gamma_{n-1}}x_{n-1} + x_ex_n).
    \end{align*}
    
    \item If the splitting type of $N_{C/X}$ contains $e-1$ terms of degree greater than $e$, then it must be of the form
    \[ E' = \cO(e+2)^{e-1}\oplus \left ( \bigoplus _{j=e+2}^{n}\cO(e-b_j)\right ), \]
    with $b_j\ge 0$, $\sum_{j=e+2}^n b_j = e$ and $e<n-1$. In this case, $X$ contains the $e$-plane spanned by $C$. We produce a quadric $X$, smooth along $C$, for which $N_{C/X}\cong E'$.
    
    More explicity, let $\gamma_n = e$ and $\gamma_j = e-b_n-b_{n-1}-\cdots -b_{j+1}$ for $e+1\le j\le n-1$, and let $X$ be the quadric given by the polynomial
    \[ F = \sum_{j=e+1}^n x_{\gamma_j}x_j = x_0x_{e+1} + x_{b_{e+2}}x_{e+2} + \cdots + x_{\gamma_{n-1}}x_{n-1} + x_ex_n. \]
    Then $N_{C/X}\cong E'$.

\end{enumerate}
\end{theorem}

The quadrics constructed in Theorem \ref{quadricsexamples} are smooth along the curve $C$ but not necessarily smooth quadrics. We can, however, work on their quadratic form matrices to produce examples with a smaller singular locus. This allows us to find a lower bound for the maximum rank of quadrics with a given normal bundle:

\begin{theorem}\label{quadricstheorem} (cf. Theorem \ref{corankexample})
\leavevmode
\begin{enumerate}[label=(\alph*)]
    \item For every splitting type
    \[ E = \left ( \bigoplus_{i=1}^{e-2}\cO(e+2-a_i) \right )\oplus \left ( \bigoplus _{j=e+1}^{n}\cO(e-b_j)\right ), \]
    with $a_i, b_j\ge 0$ and $\sum_{i=1}^{e-2}a_i + \sum_{j=e+1}^nb_j = e-2$, we obtain an example of quadric $X$ of corank at most $\sum _{a_i\ge 4}(a_i-3)$ with $N_{C/X}\cong E$. In particular, if $a_i\le 3$ for all $i$, there exists a smooth quadric $X$ with $N_{C/X}\cong E$.
    \newline
    \item For splitting types of the form
    \[ E' = \cO(e+2)^{e-1}\oplus \left ( \bigoplus _{j=e+2}^{n}\cO(e-b_j)\right ), \]
    with $b_j\ge 0$ and $\sum _{j=e+2}^n b_j = e$, let $w = |\{ j \ | \ b_j = 0 \}|$ be the number of terms of degree $e$. Then we obtain a quadric $X$ of corank $e+1 - \mathrm{min}\{e+1, n-e-w\}$ with $N_{C/X}\cong E'$. In particular, if $e+1\le n-e-w$, then there exists a smooth quadric $X$ with $N_{C/X}\cong E'$.
\end{enumerate}
\end{theorem}

\noindent \textbf{Organization of the paper.} In Section 2, we discuss definitions and computations that will be used throughout the paper. In Section 3, we work on the case $d\ge 3$, both providing the example of a hypersurface with $N_{C/X}\cong E_{\vec{a}}$ and computing the dimension of $\Sigma_{\vec{a}}$. Section 4 is dedicated to the case $d=2$; we compute the example with a given splitting type, and prove Theorem \ref{quadricstheorem}.

\medskip

\noindent \textbf{Acknowledgements.} I would like to thank my advisor Izzet Coskun for our many discussions and his guidance throughout the writing of this paper. I would also like to thank Benjamin Gould, Yeqin Liu, Eric Riedl, Geoffrey Smith, and Herivelto Borges for reading my drafts and giving valuable suggestions. I thank the referee for their helpful comments and for spotting a missing case in the theorem.

\section{Preliminaries}

  In this section we cover the definitions and results necessary to describe our approach. Most results here are proved in \cite{CR}. We describe the rational normal curves, and give an explicit computation of their normal bundle on a given hypersurface. We take coordinates $(x_0: \cdots : x_n)$ on $\P^n$.
  
\medskip

\subsection{Rational Normal Curves} For $e\le n$, we say the rational normal curve $R_e$ of degree $e$ in $\P^n$ is the curve defined by
\[ (s^e : s^{e-1}t : s^{e-2}t^2 : \cdots : st^{e-1} : t^e : 0 : \cdots : 0): \P^1\to \P^n. \]

Its homogeneous ideal $I_{R_e}\subset K[x_0, \ldots , x_n]$ is cut out by quadrics and linear forms:
\[ I_{R_e} = ( \{ Q_{ij} = x_ix_{j-1} - x_{i-1}x_j \mid 1\le i < j\le e \}\cup \{x_{e+1}, \ldots , x_n \} ). \]

The $Q_{ij}$ correspond to the $2\times 2$ minors of the matrix
\[ \begin{pmatrix}
x_1 & x_2 & \cdots & x_e\\
x_0 & x_1 & \cdots & x_{e-1}
\end{pmatrix}. \]

We can get the relations between the $Q_{ij}$ as the $3\times 3$ minors of the matrices
\[ \begin{pmatrix}
x_1 & x_2 & \cdots & x_e\\
x_1 & x_2 & \cdots & x_e\\
x_0 & x_1 & \cdots & x_{e-1}
\end{pmatrix} \text{ and } \begin{pmatrix}
x_0 & x_1 & \cdots & x_{e-1}\\
x_1 & x_2 & \cdots & x_e\\
x_0 & x_1 & \cdots & x_{e-1}
\end{pmatrix}. \]

Let $R_n$ be the rational normal curve of degree $n$ in $\P^n$. From the relations above, Proposition 2.4 in \cite{CR} shows that the quadrics $Q_{i,i+1}$, for $1\le i\le n-1$ suffice to determine the elements of $H^0(N_{R_n/\P^n})$.

\begin{proposition}\label{relations}(\cite{CR} Proposition 2.4)
An element $\alpha \in H^0(N_{R_n/\P^n}) = \Hom (\cI_{R_n/\P^n}, \cO_C)$ is determined by the images $\alpha (Q_{i,i+1})$, for $1\le i\le n-1$. Furthermore, $s^{n-i-1}t^{i-1}$ divides $\alpha (Q_{i,i+1})$ and this is the only constraint on $\alpha (Q_{i,i+1})$. If $b_{i,i+1}$, for $1\le i\le n-1$, are arbitrary polynomials of degree $n+2$, there exists an element $\alpha \in H^0(N_{R_n/\P^n})$ such that $\alpha (Q_{i,i+1}) = s^{n-i-1}t^{i-1}b_{i,i+1}$.

In addition, the image $\alpha (Q_{i,j})$ of the other generators of $I_{R_n}$ are expressed in terms of $b_{l,l+1}$ by
\[ \alpha (Q_{i,j}) = \sum _{l=i}^{j-1} s^{n-j-i+l}t^{j+i-l-2}b_{l,l+1}. \]
\end{proposition}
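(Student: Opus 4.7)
The approach is to parametrize $\alpha \in H^0(N_{R_n/\P^n}) = \Hom(\cI_{R_n}/\cI_{R_n}^2, \cO_{R_n})$ by the values $\alpha(Q_{i,i+1})$ on the subdiagonal generators, using the $3\times 3$ minor syzygies recorded just before the proposition, and then to match dimensions against $h^0(N_{R_n/\P^n}) = h^0(\cO_{\P^1}(n+2)^{n-1}) = (n-1)(n+3)$.

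The first step is to extract from the two matrices displayed in the excerpt the syzygies
\[ x_i Q_{i+1,k} - x_{i+1} Q_{i,k} + x_k Q_{i,i+1} = 0 \quad \text{and} \quad x_{i-1} Q_{i+1,k} - x_i Q_{i,k} + x_{k-1} Q_{i,i+1} = 0. \]
Applying $\alpha$ and restricting to $R_n$ via $x_\ell = s^{n-\ell}t^\ell$, the second identity becomes $s/t$ times the first, so both reduce to
\[ \alpha(Q_{i,k}) = \frac{s}{t}\,\alpha(Q_{i+1,k}) + \left(\frac{t}{s}\right)^{k-i-1}\alpha(Q_{i,i+1}) \]
as an identity in $K(s,t)$. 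Induction on $k-i$ telescopes this to
\[ \alpha(Q_{i,k}) = \sum_{l=i}^{k-1}\left(\frac{s}{t}\right)^{l-i}\left(\frac{t}{s}\right)^{k-l-1}\alpha(Q_{l,l+1}), \]
which shows that the restriction map $\alpha \mapsto (\alpha(Q_{l,l+1}))_{l=1}^{n-1}$ is injective.

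The second step is the divisibility statement. Since each $\alpha(Q_{i,k})$ must actually be a polynomial, the $l=i$ summand in the telescoped formula for $\alpha(Q_{i,n})$, which carries a factor $s^{-(n-i-1)}$, forces $s^{n-i-1}\mid \alpha(Q_{i,i+1})$; symmetrically, the $l=i$ summand in $\alpha(Q_{1,i+1})$ forces $t^{i-1}\mid \alpha(Q_{i,i+1})$. A term-by-term check of the $s$- and $t$-exponents confirms these two divisibilities together suffice to clear all negative powers in every $\alpha(Q_{i,k})$. Writing $\alpha(Q_{i,i+1}) = s^{n-i-1}t^{i-1}b_{i,i+1}$ with $b_{i,i+1}$ an arbitrary polynomial of degree $n+2$ and substituting into the telescoped formula yields the stated closed form for $\alpha(Q_{i,j})$.

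Finally, the parameter space of tuples $(b_{l,l+1})$ has dimension $(n-1)(n+3) = h^0(N_{R_n/\P^n})$. Combined with the injectivity above, this forces the restriction map to be an isomorphism onto the divisibility subspace, so every choice of $b_{l,l+1}$ lifts to a unique $\alpha \in H^0(N_{R_n/\P^n})$. The main subtlety in this plan is that only the syzygies from triples $(i,i+1,k)$ are analyzed explicitly, whereas the $Q_{i,j}$ satisfy many more syzygies in $\cO_{\P^n}$; that the constructed $\alpha$ automatically respects all of them is not verified directly, but is absorbed cleanly into the dimension match, which I regard as the cleanest way past this obstacle.
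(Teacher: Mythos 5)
First, a point of comparison: the paper does not prove this proposition at all---it is quoted from \cite{CR}, Proposition 2.4---so your argument can only be measured against that source. Your overall strategy (determine $\alpha$ on the subdiagonal generators via the $3\times 3$-minor syzygies, extract the divisibility, close the existence direction by a dimension count) is essentially the standard one, and your telescoped formula correctly reproduces the closed form with the sum starting at $l=i$ (the proposition's displayed lower limit $l=1$ is a typo; the paper itself uses $\sum_{l=i}^{j-1}$ in Section 2.2).

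However, the divisibility step as you justify it does not go through. Writing your telescoped identity for $k=n$ with denominators cleared,
\[ (st)^{n-i-1}\alpha(Q_{i,n}) \;=\; \sum_{l=i}^{n-1} s^{2(l-i)}t^{2(n-1-l)}\,\alpha(Q_{l,l+1}), \]
every summand with $2(l-i)<n-i-1$ carries a negative power of $s$ after dividing back, not only the $l=i$ one, so the fact that the total is a polynomial does not by itself force $s^{n-i-1}\mid\alpha(Q_{i,i+1})$: the negative powers could a priori cancel between different summands. The missing ingredient is a downward induction on $i$: assuming $s^{n-l-1}\mid\alpha(Q_{l,l+1})$ for all $l>i$, each summand with $l>i$ is divisible by $s^{2(l-i)+n-l-1}$, whose exponent is at least $n-i$, so reducing the display modulo $s^{n-i-1}$ kills the left side and all terms $l>i$, isolating $t^{2(n-i-1)}\alpha(Q_{i,i+1})\equiv 0$ and hence $s^{n-i-1}\mid\alpha(Q_{i,i+1})$ (symmetrically for $t^{i-1}$ via $\alpha(Q_{1,i+1})$ and upward induction). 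This is precisely the part that \emph{cannot} be absorbed into the dimension match: the count identifies the image of $\alpha\mapsto(\alpha(Q_{l,l+1}))_l$ with the divisibility subspace only after you know the image is contained in it. A second, smaller issue: you take $h^0(N_{R_n/\P^n})=(n-1)(n+3)$ from the splitting $N_{R_n/\P^n}\cong\cO(n+2)^{n-1}$, but in \cite{CR} that splitting (Corollary \ref{normalbundlee} here) is deduced from this very proposition. To avoid circularity, get the dimension from $h^1(N_{R_n/\P^n})=0$ (it is a quotient of $T_{\P^n}|_C$, itself a quotient of $\cO_{\P^1}(n)^{n+1}$) together with Riemann--Roch: $h^0=\deg+\rk=(n(n+1)-2)+(n-1)=(n-1)(n+3)$. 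With these two repairs your argument is complete.
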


\begin{corollary}\label{normalbundlee} (\cite{CR} Corollary 2.6)
For an integer $e\le n$, the normal bundle $N_{R_e/\P^n}$ is $N_{R_e/\P^e}\oplus N_{\P^e/\P^n}\cong \cO_{\P^1}(e+2)^{e-1}\oplus \cO_{\P^1}(e)^{n-e}$.
\end{corollary}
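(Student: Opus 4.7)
The plan is to factor the inclusion as $R_e\subset \P^e\subset \P^n$ (where $\P^e$ is the linear span of $R_e$) and exploit the resulting short exact sequence of normal bundles
\[ 0 \longrightarrow N_{R_e/\P^e} \longrightarrow N_{R_e/\P^n} \longrightarrow N_{\P^e/\P^n}|_{R_e} \longrightarrow 0. \]
For the quotient term, I would observe that $\P^e\subset \P^n$ is cut out by $n-e$ linear forms, giving $N_{\P^e/\P^n}\cong \cO_{\P^e}(1)^{n-e}$; restricting along the degree-$e$ embedding $R_e\hookrightarrow \P^e$ yields $\cO_{\P^1}(e)^{n-e}$. For the sub, $N_{R_e/\P^e}$ is a rank-$(e-1)$ bundle on $\P^1$ whose degree is $(e-1)(e+2)$, as follows from $0\to T_{R_e}\to T_{\P^e}|_{R_e}\to N_{R_e/\P^e}\to 0$ together with the Euler-sequence computation $\deg T_{\P^e}|_{R_e}=e(e+1)$ and $\deg T_{R_e}=2$.

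The main obstacle is showing that $N_{R_e/\P^e}$ is the balanced bundle $\cO_{\P^1}(e+2)^{e-1}$. My approach is to apply Proposition \ref{relations} with $n$ replaced by $e$. The divisibility $s^{e-i-1}t^{i-1}\mid \alpha(Q_{i,i+1})$ holds not just for global sections but also locally, since it is a consequence of the relations among the $Q_{i,j}$ coming from the $3\times 3$ minors in Section 2, which hold at the level of the ideal sheaf. Thus I obtain a sheaf homomorphism
\[ \phi:\; N_{R_e/\P^e}\longrightarrow \bigoplus_{i=1}^{e-1}\cO_{\P^1}(e+2),\qquad \alpha\longmapsto \bigl(\alpha(Q_{i,i+1})/s^{e-i-1}t^{i-1}\bigr)_{i}. \]
By Proposition \ref{relations}, the induced map $H^0(\phi)$ is a bijection of $(e-1)(e+3)$-dimensional vector spaces. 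Since the target is globally generated, surjectivity of $H^0(\phi)$ forces $\phi$ itself to be surjective as a morphism of sheaves; and since source and target are locally free of the same rank $e-1$, the kernel must be a rank-zero subsheaf of a locally free sheaf, hence zero. Therefore $\phi$ is an isomorphism.

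Finally, I would conclude by noting that the starting sequence splits, since
\[ \Ext^1\bigl(\cO_{\P^1}(e)^{n-e},\,\cO_{\P^1}(e+2)^{e-1}\bigr) = H^1\bigl(\cO_{\P^1}(2)\bigr)^{(n-e)(e-1)} = 0, \]
yielding the desired decomposition $N_{R_e/\P^n}\cong \cO_{\P^1}(e+2)^{e-1}\oplus \cO_{\P^1}(e)^{n-e}$. The delicate point is the sheaf-theoretic interpretation of Proposition \ref{relations}, i.e., verifying that for any local section $\alpha$ the quotient $\alpha(Q_{i,i+1})/s^{e-i-1}t^{i-1}$ defines a genuine section of $\cO_{\P^1}(e+2)$; once this is in hand, the rest of the argument is essentially bookkeeping of ranks and degrees.
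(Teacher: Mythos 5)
The paper does not actually prove this corollary; it is imported verbatim from \cite{CR} (their Corollary 2.6), so there is no internal proof to compare against. Your reconstruction is correct and is essentially the argument one expects: the splitting of the sequence $0\to N_{R_e/\P^e}\to N_{R_e/\P^n}\to N_{\P^e/\P^n}|_{R_e}\to 0$ via the vanishing of $\Ext^1(\cO_{\P^1}(e)^{n-e},\cO_{\P^1}(e+2)^{e-1})=H^1(\cO_{\P^1}(2))^{(n-e)(e-1)}=0$ is fine, the identification of the quotient as $\cO_{\P^1}(e)^{n-e}$ is immediate, and the rank/degree bookkeeping for $N_{R_e/\P^e}$ is correct. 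You are also right that the crux is the balancedness of $N_{R_e/\P^e}$, and your flagged ``delicate point'' is genuinely load-bearing: the $H^0$-level content of Proposition \ref{relations} alone gives $h^0(N_{R_e/\P^e})=(e-1)(e+3)$, which is consistent with \emph{any} splitting $\bigoplus\cO(c_i)$ with all $c_i\ge -1$ and $\sum c_i=(e-1)(e+2)$ (e.g.\ $\cO(e+1)\oplus\cO(e+3)\oplus\cO(e+2)^{e-3}$), so a pure dimension count cannot close the argument and the sheaf-level divisibility really is needed. That divisibility does hold for local sections for the reason you give: the syzygies among the $Q_{i,j}$ coming from the $3\times 3$ minors are relations in the $\cO_C$-module $\cI/\cI^2$, hence impose the same vanishing orders on $\alpha(Q_{i,i+1})$ at $s=0$ and $t=0$ for any local homomorphism $\alpha$, not just global ones (away from those two points the monomials $s^{e-i-1}t^{i-1}$ are units and there is nothing to check). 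With that in hand, your ``surjective on $H^0$ plus globally generated target implies surjective, equal ranks implies isomorphism'' step is airtight. An alternative way to finish, closer in spirit to how the paper uses such data elsewhere, is to show $h^0(N_{R_e/\P^e}(-(e+3)p))=0$ for a point $p$ where the $Q_{i,i+1}$ generate the stalk of the conormal module, which together with the degree forces all summands to equal $e+2$; but this also requires local information about $\cI/\cI^2$, so it is not genuinely cheaper than your route.
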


\subsection{Normal bundles on hypersurfaces} Let $C$ be a smooth rational curve of degree $e$ and let $X$ be a degree $d$ hypersurface in $\P^n$ containing $C$. Using the identification $N_{X/\P^n}\cong \cO_{X}(d)$, we can write the standard normal bundle sequence as
\[ 0\longrightarrow N_{C/X}\longrightarrow N_{C/\P^n}\overset{\psi}{\longrightarrow} N_{X/\P^n}|_C\cong \cO_{\P^1}(de). \]

Thus, we get a map
\[ \phi : H^0(\cI_C(d))\to \Hom (N_{C/\P^n}, \cO_{\P^1}(de)) \]
that sends polynomials $F$ defining $X = V(F)$ to elements $\psi$ in $\Hom (N_{C/\P^n}, \cO_{\P^1}(de))$.

This can be identified via the sequence
\[ 0\longrightarrow \cI_{C/\P^n}^2\longrightarrow \cI_{C/\P^n}\longrightarrow N_{C/\P^n}^*\longrightarrow 0 \]
when we twist it by $\cO_{\P^n}(d)$ and take global sections:
\begin{equation*}\label{phisequence}
0\longrightarrow H^0(\cI_{C/\P^n}^2(d))\longrightarrow H^0(\cI_{C/\P^n}(d))\overset{\phi}{\longrightarrow} H^0(N_{C/\P^n}^*(d)).
\end{equation*}

When $C = R_e$ is the rational normal curve of degree $e$ in $\P^n$ and $d\ge 3$, I. Coskun and E. Riedl showed in \cite{CR} that the map $\phi$ is surjective, that is, every element $\psi \in \Hom (N_{C/\P^n}, \cO_{\P^1}(de))$ can be obtained as the map of a normal bundle sequence for some hypersurface $X$. It is not surjective when $d=2$. In section 4, we compute the dimension of the image of $\phi$ and show that it is injective for $d=2$ and $e=n$.

The relations given in Proposition \ref{relations} allow us to explicitly write the map $\psi _F: N_{C/\P^n}\to \cO_{\P^1}(de)$ for a hypersurface $X=V(F)$. First, let $e=n$. Write $F$ in terms of the generators of $I_C$, $F = \sum _{1\le i<j\le n}F_{i,j}Q_{i,j}$. Then $\psi_F(\alpha) = \sum_{1\le i<j\le n}F_{i,j}|_C\cdot \alpha(Q_{i,j})$. By the relation from Proposition \ref{relations}, we get
\[ \psi_F(\alpha) = \sum _{1\le i<j\le n}F_{i,j}|_C\sum _{l=i}^{j-1} s^{n-j-i+l}t^{j+i-l-2}b_{l,l+1}. \tag{1} \]

Collect the terms and write the sum as $\sum_{i=1}^{n-1}C_ib_{i,i+1}$, then the map $\psi_F: \cO_{\P^1}(n+2)^{n-1}\to \cO_{\P^1}(dn)$ is given by the matrix $(C_1, \ \cdots \ , C_{n-1})$.

When $e<n$, by Corollary \ref{normalbundlee} the normal bundle $N_{C/\P^n}$ splits as the direct sum $N_{C/\P^e}\oplus N_{\P^e/\P^n}$. We write $F = \sum _{1\le i<j\le e}F_{i,j}Q_{i,j} + \sum _{k=e+1}^nG_kx_k$, and collect the coefficients $C_1, \ldots , C_{e-1}$ of the $b_{l,l+1}$ as above. Then the map $\psi_F: \cO_{\P^1}(e+2)^{e-1}\oplus \cO_{\P^1}(e)^{n-e}\to \cO_{\P^1}(de)$ is given by the matrix $(C_1, \ \cdots \ , C_{e-1}; \ G_{e+1}|_C, \ \cdots \ , G_n|C)$.

\medskip

We can use this description to obtain the map $\psi_F$ from explicit hypersurfaces $X=V(F)$. By choosing the appropriate $\psi_F$, we will then obtain each possible splitting type for $N_{C/X}$.

First, observe that when $C$ is the rational normal curve of degree $e$ in $\P^e$, the restrictions $F|_C$ of hypersurfaces of degree $k\ge 1$ cut out the complete linear series $|\cO_C(k)|$, that is, rational normal curves are projectively normal.

\begin{lemma}\label{projectivelynormal}\cite{ACGH}
For every $k\ge 1$, the map $H^0(\cO_{\P^n}(k))\to H^0(\cO_{C}(k))\cong H^0(\cO_{\P^1}(ek))$, $F\mapsto F|_C$, is surjective.
\end{lemma}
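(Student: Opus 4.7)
The plan is to reduce to the case $n=e$ and then hit every monomial in the target directly. Since the rational normal curve $C$ lies in the linear subspace $\P^e = V(x_{e+1}, \dots, x_n) \subset \P^n$, the restriction $H^0(\cO_{\P^n}(k)) \to H^0(\cO_C(k))$ factors as
\[
H^0(\cO_{\P^n}(k)) \twoheadrightarrow H^0(\cO_{\P^e}(k)) \to H^0(\cO_C(k)),
\]
where the first arrow (setting $x_{e+1} = \cdots = x_n = 0$) is obviously surjective. So it suffices to show the second map is surjective when $C \subset \P^e$ is the rational normal curve of degree $e$.

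Next I would identify the restriction map explicitly. Under the parametrization $x_i = s^{e-i}t^i$, a monomial $x_0^{a_0}\cdots x_e^{a_e}$ of total degree $k$ restricts to $s^{ek - b}t^{b}$ where $b = \sum_{i=0}^e i\cdot a_i$. Since the target $H^0(\cO_{\P^1}(ek))$ has basis $\{s^{ek-b}t^b : 0 \le b \le ek\}$, surjectivity reduces to the following combinatorial claim: for each $0 \le b \le ek$, there exist integers $a_0, \dots, a_e \ge 0$ with $\sum a_i = k$ and $\sum i\cdot a_i = b$. This is elementary; for instance, write $b = eq + r$ with $0 \le r < e$, and take $a_e = q$, $a_r = 1$, $a_0 = k-q-1$ (adjusting when $r=0$). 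Each such tuple exhibits a preimage of $s^{ek-b}t^b$, proving surjectivity.

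There is no real obstacle, since the argument is purely combinatorial once the reduction is made; the fact encodes that the $e$-th Veronese embedding of $\P^1$ is projectively normal, which amounts to surjectivity of the multiplication map $\Sym^k(\Sym^e V) \to \Sym^{ek} V$ for $V = H^0(\cO_{\P^1}(1))$. The only thing to double-check is that the reduction $H^0(\cO_{\P^n}(k)) \twoheadrightarrow H^0(\cO_{\P^e}(k))$ does not lose information relevant to $C$, which is immediate because $C \subset \P^e \subset \P^n$ and restriction is functorial.
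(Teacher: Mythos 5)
Your proof is correct: the reduction to $\P^e$ is harmless, and the combinatorial claim (every $0\le b\le ek$ is realized as $\sum i\,a_i$ with $\sum a_i = k$, $a_i\ge 0$) is verified by your explicit tuple, which indeed satisfies $a_0\ge 0$ in all cases since $r\ge 1$ forces $q\le k-1$. The paper does not prove the lemma itself but cites \cite{ACGH} for projective normality of rational normal curves; however, the remark immediately following the lemma in the paper — that one finds explicit preimages by writing each monomial $s^{ek-b}t^b$ as a product of $k$ monomials of degree $e$ — is exactly the argument you give, so your approach matches the paper's intended (if unwritten) justification.
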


To obtain an $F$ for each polynomial in $H^0(\cO_{\P^1}(ek))$ it suffices to write each monomial as a product of $k$ monomials of degree $e$. For instance, for $F|_C = s^{ek-2}t^2$ we can write $s^{ek-2}t^2 = s^{e(k-1)}(s^{e-2}t^2)$ and choose $F = x_0^{k-1}x_2$.

Before we advance to the general case, let us compute an example of a hypersurface $X$ for which $N_{C/X}$ is balanced in order to display the computation process with a simpler notation. In particular, we recover Corollary 3.8 of \cite{CR} for the rational normal curve of degree $n$ in $\P^n$. We make use of the same method in the proof of Theorem \ref{existence}.

\begin{proposition}\label{balancedexample} (cf. \cite{CR}, Corollary 3.8)
Let $d\ge 2$, and let $X$ be a general hypersurface of degree $d$ containing the rational normal curve $C$ of degree $n$ in $\P^n$. Then the normal bundle $N_{C/X}$ is balanced.
\end{proposition}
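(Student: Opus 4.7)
The plan is to reduce the proposition to producing a single hypersurface $X$ whose normal bundle $N_{C/X}$ is balanced, and then to construct such an $X$ explicitly via the matrix description of $\psi_F$ developed in Section 2.

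For the reduction, balancedness of a vector bundle on $\P^1$ of fixed rank and degree is equivalent to $h^1(\End(N_{C/X})) = 0$, and this function is upper semi-continuous in flat families. Hence the balanced locus in $\Sigma$ is Zariski open, and it suffices to produce one $F \in \Sigma$ realizing the balanced splitting type.

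To build such an $F$, I would use the recipe from Section 2: for $F = \sum_{i=1}^{n-1} f_i Q_{i,i+1}$ with $f_i \in H^0(\cO_{\P^n}(d-2))$, the map $\psi_F$ is given by the row $(c_1,\ldots,c_{n-1})$ with $c_i = f_i|_C \cdot s^{n-i-1} t^{i-1}$, each of degree $m := (d-1)n-2$. I would choose each $f_i$ to be a degree $d-2$ monomial in $x_0,\ldots,x_n$ whose restriction is $f_i|_C = s^{n(d-2)-T_i} t^{T_i}$ for integers $0 = T_1 \leq T_2 \leq \cdots \leq T_{n-1} = n(d-2)$ with consecutive gaps $T_{i+1}-T_i$ as equal as possible. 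Since $x_j|_C = s^{n-j} t^j$, any target $T_i \in \{0,1,\ldots,n(d-2)\}$ is realizable by a product of the $x_j$. The resulting $c_i = s^{p_i} t^{m-p_i}$ satisfy $\delta_i := p_i - p_{i+1} = 1 + (T_{i+1}-T_i)$ lying in $\{\lfloor m/(n-2)\rfloor, \lceil m/(n-2)\rceil\}$, and $c_1 = s^m, c_{n-1} = t^m$ guarantee $\gcd(c_i) = 1$, so $\psi_F$ is a surjection of sheaves and $X$ is smooth along $C$.

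To identify the splitting type, I would analyze the syzygy sheaf $S := \ker(\cO_{\P^1}^{n-1} \to \cO_{\P^1}(m))$ of the monomial row with strictly decreasing $s$-exponents. The obvious two-term relations $\sigma_i := t^{\delta_i} e_i - s^{\delta_i} e_{i+1}$, for $i = 1, \ldots, n-2$, give $n-2$ injections $\cO(-\delta_i) \hookrightarrow \cO^{n-1}$ into $S$. Matching rank ($n-2$) and total degree ($-m = -\sum \delta_i$) then forces $S \cong \bigoplus_{i=1}^{n-2} \cO(-\delta_i)$, whence
\[ N_{C/X} \;\cong\; S \otimes \cO(n+2) \;\cong\; \bigoplus_{i=1}^{n-2} \cO_{\P^1}(n+2-\delta_i), \]
which is balanced since the $\delta_i$ differ by at most one.

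The main obstacle is the last step: rigorously showing that the $\sigma_i$ exhaust the minimal generators of $S$, so that no unexpected lower-degree syzygy alters the splitting. I would handle this either by computing $h^0(S(k))$ directly via inclusion-exclusion on the monomials appearing in $\sum f_i c_i$ at each degree $k$, or by invoking the standard free resolution of a monomial ideal in two variables. The base case $d = 2$ provides a sanity check: here each $f_i$ is a nonzero constant $\lambda_i$, every $\delta_i$ equals $1$, and the construction recovers $N_{C/X} \cong \cO_{\P^1}(n+1)^{n-2}$ at once.
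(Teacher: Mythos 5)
Your proposal is correct and takes essentially the same route as the paper: the same reduction by openness of the balanced locus, the same polynomial $F=\sum F_iQ_{i,i+1}$ with monomial coefficients whose restrictions $s^{n(d-2)-T_i}t^{T_i}$ have nearly equal gaps, and the same two-term column relations assembled into a full-rank bidiagonal matrix. The ``main obstacle'' you flag is already closed by the rank-and-degree argument you state --- an injective map of vector bundles on $\P^1$ of equal rank and degree is an isomorphism --- which is exactly how the paper concludes, so no further analysis of minimal generators of the syzygy module is needed.
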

\begin{proof}
Since being balanced is an open condition in a family of vector bundles on $\P^1$ with fixed rank and degree, it suffices to show an example of a hypersurface $X$ for each $d$. The idea is to compute the kernel of $\psi_F: \cO(n+2)^{n-1}\to \cO(dn)$ directly from the linear relations between the entries of $\psi_F$, which we call the \textit{column relations} of $\psi_F$.

If $d=2$, consider $F = \sum_{i=1}^{n-1} Q_{i,i+1}$, then
\[ \psi_F = ( s^{n-2},\ s^{n-3}t, \cdots ,\ st^{n-3},\ t^{n-2} ). \]

The columns $C_1, \hdots , C_{n-1}$ of $\psi_F$, satisfy the relations
\[ tC_i - sC_{i+1} = t(s^{n-1-i}t^{i-1}) - s(s^{n-2-i}t^{i}) = 0, \ \ 1\le i\le n-2. \]

These relations define the vectors $K_i = (\alpha_1, \ldots , \alpha_{n-1})$ with $\alpha_i = t$, $\alpha_{i+1} = -s$ and $\alpha_j = 0$ for $j\neq i, i+1$. Let $K$ be the matrix whose columns are $K_i$:
\[ K = \begin{pmatrix}
t & 0 & 0 & \cdots & 0 & 0\\
-s & t & 0 & \cdots & 0 & 0\\
0 & -s & t & \cdots & 0 & 0\\
\vdots &  \vdots & \vdots & \ddots & \vdots & \vdots\\
0 &  0  & 0 & \cdots & t & 0\\
0 &  0  & 0 & \cdots & -s & t\\
0 &  0  & 0 & \cdots & 0 & -s\\
\end{pmatrix}. \]

Then $K$ defines a map
\[ K: \cO(n+1)^{n-2}\to \cO(n+2)^{n-1} \]
whose image, by the column relations above, is in the kernel $N_{C/X}$ of $\psi_F$. Notice also that $K$ has maximum rank, so the map is injective, and thus it factors through the $N_{C/X}$. As $N_{C/X}$ has rank $n-2$, we have $N_{C/X}\cong \cO(n+1)^{n-2}$.

Now, let $d\ge 3$. First, observe that if $N_{C/X}$ is balanced, then it must be  $\cO(A)^{m}\oplus \cO(A+1)^{n-2-m}$ with
\[ A = \left \lfloor \dfrac{n(n-d+1)-2}{n-2} \right \rfloor \text{ and } m = A(n-2)-n(n-d). \]

The approach is to construct $\psi_F$ such that its column relations define an injective map
\[ K: \cO(A)^{m}\oplus \cO(A+1)^{n-2-m}\to \cO(n+2)^{n-1} \]
which, as in the case $d=2$, will be the kernel of $\psi_F$. This means we want to obtain $m$ relations of degree $(n+2)-A$ and $n-2-m$ relations of degree $(n+2)-(A+1)$ between the columns of $\psi_F$. To simplify the analysis, we look at polynomials $F = \sum_{i=1}^{n-1}F_iQ_{i,i+1}$ with $F_i$ monomials. Set $F_1 = x_0^{d-2}$ and $F_{n-1} = x_n^{d-2}$, so $F$ induces a map of the form
\[ \psi_F = ( (s^{n-2})(s^{n(d-2)}), \ (s^{n-3}t)F_2|_C, \cdots ,\ (st^{n-3})F_{n-2}|_C, \ (t^{n-2})(t^{n(d-2)}) ). \]

Since an arbitrary degree $d-2$ monomial $F_i$ has the form $F_i|_C = s^{n(d-2)-\beta_i}t^{\beta_i}$ when restricted to $C$, the map $\psi_F$ has the form
\[ \psi_F = ( (s^{n-2})(s^{n(d-2)}), \ (s^{n-3}t)(s^{n(d-2)-\beta_2}t^{\beta_2}), \cdots ,\ (st^{n-3})(s^{n(d-2)-\beta_{n-2}}t^{\beta_{n-2}}), \ (t^{n-2})(t^{n(d-2)}) ). \]

To further simplify, we look for relations that only involve two consecutive entries of $\psi_F$. One way to find such $\psi_F$ is to choose the $\beta_i$ in non-decreasing order $0\le \beta_2\le \cdots \le \beta_{n-2}\le n(d-2)$. This way, we get the columns relations
\begin{align*}
    & t^{\beta_2+1}C_1 - s^{\beta_2+1}C_2 = 0, \\
    & t^{\beta_{i+1}-\beta_i+1}C_i - s^{\beta_{i+1}-\beta_i+1}C_{i+1} = 0, \ \ 2\le i\le n-3 \\
    & t^{n(d-2)-\beta_{n-2}+1}C_{n-2} - s^{n(d-2)-\beta_{n-2}+1}C_{n-1} = 0.
\end{align*}

And as before, we define the matrix $K$ whose columns follow from the relations above:
\[ K = \begin{pmatrix}
t^{\beta_2+1} & 0 & 0 & \cdots & 0 & 0\\
-s^{\beta_2+1} & t^{\beta_3-\beta_2+1} & 0 & \cdots & 0 & 0\\
0 & -s^{\beta_3-\beta_2+1} & t^{\beta_4-\beta_3+1} & \cdots & 0 & 0\\
\vdots &  \vdots & \vdots & \ddots & \vdots & \vdots\\
0 &  0  & 0 & \cdots & t^{\beta_{n-2}-\beta_{n-3}+1} & 0\\
0 &  0  & 0 & \cdots & -s^{\beta_{n-2}-\beta_{n-3}+1} & t^{n(d-2)-\beta_{n-2}+1}\\
0 &  0  & 0 & \cdots & 0 & -s^{n(d-2)-\beta_{n-2}+1}\
\end{pmatrix}\]

Since $K$ has maximum rank, it defines an injective map 
\[ \cO(n+2-(\beta_2+1))\oplus \left (\bigoplus_{i=2}^{n-3}\cO(n+2-(\beta_{i+1}+\beta_i+1))\right )\oplus \cO(n+2-(n(d-2)-\beta_{n-2}+1))\overset{K}{\to}\cO(n+2)^{n-1} \]
which, due to the relations, factors through the kernel of $\psi_F$. Since the direct sum above and $N_{C/X}$ have the same rank, it follows that
\[ N_{C/X}\cong \cO(n+2-(\beta_2+1))\oplus \left (\bigoplus_{i=2}^{n-3}\cO(n+2-(\beta_{i+1}+\beta_i+1))\right )\oplus \cO(n+2-(n(d-2)-\beta_{n-2}+1)). \tag{$2$} \]

To conclude, we pick the appropriate values for $\beta_i$, $2\le i\le n-2$. Choose
\[ \beta _i = \left\{\begin{matrix}
(i-1)(n+1-A), & 2\le i\le m+1\\ 
(i-1)(n-A)+m, & m+2\le i\le n-2
\end{matrix}\right., \]
noting that, by Lemma \ref{projectivelynormal}, there exist $F_i$ such that $F_i|_C = s^{n(d-2)-\beta_i}t^{\beta_i}$. A simple computation then shows that the direct sum on the right-hand side of $(2)$ is $\cO(A)^{m}\oplus \cO(A+1)^{n-2-m}$.
\end{proof}

\section{Hypersurfaces of Degree at Least 3}

Throughout this section, we assume $d\ge 3$. We study the splitting types of $N_{C/X}$ for hypersurfaces of degree $d$ in $\P^n$ containing a degree $e\le n$ rational normal curve $C = R_e$. By Corollary \ref{normalbundlee}, the normal bundle $N_{C/X}$ is the kernel in the sequence
\[ 0\longrightarrow N_{C/X}\longrightarrow \cO(e+2)^{e-1}\oplus \cO(e)^{n-e}\overset{\psi_F}{\longrightarrow} \cO(de)\longrightarrow 0 \]
for $F$ a degree $d$ polynomial defining $X = V(F)$.

In this section, we ask what splitting types occur as the normal bundle $N_{C/X}$, and look for examples of hypersurfaces $X$ for each possible splitting type. We also ask for the dimension of the space $\Sigma_{\vec{a}}$ of hypersurfaces $X$ with a given splitting type $E_{\vec{a}}$.

The surjectivity of the map $\phi : H^0(\cI_{C/\P^n}(d))\to \Hom (N_{C/\P^n}, \cO(de))$ for $d\ge 3$ (\cite{CR} Theorem 3.1) is enough to show that $N_{C/X}$ is balanced for the general $X$ (\cite{CR}, Corollary 3.8), and we use it here to compute the dimension of $\Sigma_{\vec{a}}$.

\subsection{Examples of hypersurfaces for each splitting type}

From the short exact sequence, a candidate for splitting type of $N_{C/X}$ must have the form of a rank $(n-2)$ direct sum 
\[ N_{C/X}\cong \left(\bigoplus_{i=1}^{e-1}\cO(e+2-a_i)\right )\oplus \left (\bigoplus_{j=e+2}^{n}\cO(e-b_j) \right ), \]
with $a_i, b_j\ge 0$, and degree $\deg N_{C/X} = \deg N_{C/\P^n} - \deg \cO(de) = e(n-d+1)-2$, equivalently $\sum_{i=1}^{e-1}a_i + \sum_{j=e+2}^n b_j = e(d-1)$.

\medskip
In the following theorem, we find examples of hypersurfaces $X$, smooth along $C$, for each splitting type with at most $e-2$ summands of degrees greater than $e$. If the normal bundle $N_{C/X}$ has $e-1$ summands of degree greater than $e$, then the normal bundle has a summand $\cO(e+2)^{e-1}$. The proof of the theorem is a generalization of the computation from Proposition \ref{balancedexample}.

\begin{theorem}\label{existence}
\leavevmode
\begin{enumerate}[label=(\alph*)]
    \item For all splitting types with at most $e-2$ summands of degrees greater than $e$, that is, of the form
    \[ E = \left(\bigoplus_{i=1}^{e-2}\cO(e+2-a_i)\right )\oplus \left (\bigoplus_{j=e+1}^{n}\cO(e-b_j) \right ), \]
    with $a_i, b_j\ge 0$ and $\sum_{i=1}^{e-2}a_i + \sum_{j=e+1}^n b_j = e(d-1)-2$, we obtain explicit  examples of degree $d$ hypersurfaces $X$, smooth along the curve $C$, with normal bundle $N_{C/X}\cong E$.  
\newline
    \item If $N_{C/X}$ is not of the form $E$ above, then $e<n-1$, and the normal bundle must have the form
    \[ E' = \cO(e+2)^{e-1}\oplus \left (\bigoplus_{j=e+2}^{n}\cO(e-b_j) \right ), \]
    with $b_j\ge 0$ and $\sum_{j=e+2}^n b_j = e(d-1)$.
    For each splitting type $E'$, we find explicit examples of degree $d$ hypersurfaces $X$, smooth along $C$, with normal bundle $N_{C/X}\cong E'$.
\end{enumerate}

\end{theorem}

\begin{proof}
(a) We divide the proof into the cases $e=n$ and $e<n$.

For $e=n$, we ask if we can get all splitting types $E = \bigoplus_{i=1}^{n-2}\cO(n+2-a_i)$ with $a_i\ge 0$ and $\sum _{i=1}^{n-2}a_i = n(d-1)-2$. The approach is to construct a map $\psi_F$ from which we can easily compute the kernel from the linear relations between its entries, which we call the \textit{column relations} of $\psi_F$.

In order to start with a simple $\psi_F$, remember that $Q_{i,i+1} = x_i^2 - x_{i-1}x_{i+1}$, and consider polynomials $F$ of the form $F = \sum_{i=1}^{n-1}F_iQ_{i,i+1}$ with $F_1 = x_0^{d-2}$ and $F_{n-1} = x_n^{d-2}$, so the map $\psi_F: \cO(n+2)^{n-1}\to \cO(dn)$ has the form
\[ \psi_F = (s^{n-2}(s^{n(d-2)}), \ \ (s^{n-3}t)F_2|_C, \ \cdots ,\ (st^{n-3})F_{n-2}|_C, \ \ t^{n-2}(t^{n(d-2)})). \]

First, we check that these are all smooth along the curve $C$. Taking partial derivatives $\frac{\partial F}{\partial x_i}$ and restricting to the curve (that is, taking $Q_{ij} = 0$), we obtain
\[ \frac{\partial F}{\partial x_0} = -(d-1)x_0^{d-2}x_2, \ \ \frac{\partial F}{\partial x_{2}} = -x_0^{d-1} + 2x_{2}F_{2} - x_{4}F_{3} \ \ \text{ and } \ \ \frac{\partial F}{\partial x_{n-2}} = -x_n^{d-1} + 2x_{n-2}F_{n-2} - x_{n-4}F_{n-3}. \]
If $F$ is singular at a point $P = (s^n : s^{n-1}t : \cdots : t^n)$ on $C$, then $\frac{\partial F}{\partial x_0} = 0$ gives $s=0$ or $t=0$. If $s=0$, then $\frac{\partial F}{\partial x_{n-2}}=0$ imples $t=0$; and if $t=0$, then $\frac{\partial F}{\partial x_2}=0$ implies $s=0$. Thus, polynomials $F$ of this form give hypersurfaces that are smooth along $C$.

Now, look at the map $\psi_F$. One way to obtain a $\psi_F$ with clear relations between its entries is to choose increasing exponents for $t$. This way, we get a relation between every two consecutive entries by multiplying the first by a power of $t$ and the second by a power of $s$. We will also need zero entries to get the terms $\cO(n+2)$ of $E$. So, let $0\le z\le n-3$ and  $0\le \beta_2\le \beta _3\le \cdots \le \beta_{n-z-2}\le n(d-2)+z+1$ be integers, and let $F_i$ be such that
\[ F_i|_C = s^{n(d-2)-\beta_i}t^{\beta_i} \text{ for } 2\le i\le n-z-2, \]
and
\[ F_i|_C = 0 \text{ for } n-z-1\le i\le n-2. \]

Note that such $F_i$ exist and can be easily obtained by Lemma \ref{projectivelynormal}. Then, $\psi_F$ has increasing powers of $t$ and $z$ zero entries:
\begin{multline*}
 \psi_F = ( s^{n-2}(s^{n(d-2)}), \ (s^{n-3}t)(s^{n(d-2)-\beta_2}t^{\beta_2}), \cdots , (s^{z+1}t^{n-z-3})(s^{n(d-2)-\beta_{n-z-2}}t^{\beta_{n-z-2}}), \ 0, \ 0, \cdots \\
 \cdots 0, \ t^{n-2}(t^{n(d-2)}) ).
\end{multline*}

Its columns $C_1, C_2, \ldots , C_{n-1}$ satisfy the column relations
\begin{align*}
   & t^{\beta_2+1}C_1 - s^{\beta_2+1}C_2 = 0, \\
    & t^{\beta_{i+1}-\beta_i+1}C_i - s^{\beta_{i+1}-\beta_i+1}C_{i+1} = 0, \ \ 2\le i\le n-z-3, \\
    & C_j = 0, \ \ n-z-1\le j\le n-2, \\
    & t^{n(d-2)+z+1-\beta_{n-z-2}}C_{n-z-2} - s^{n(d-2)+z+1-\beta_{n-z-2}}C_{n-1} = 0.
\end{align*}

To simplify notation and make the final result simpler to parse, we write $\gamma_i = \beta_{i+1} - \beta_i$ for $2\le i\le n-z-3$, $\gamma_1 = \beta_2$ and $\gamma_{n-z-2} = \beta_{n-z-2}$. The conditions for $\beta_i$ translate into the conditions $\gamma_i\ge 0$ and $\gamma_{n-z-2} = \sum_{i=1}^{n-z-3}\gamma_i\le n(d-2)+z+1$. Rewriting the column relations:
\begin{align*}
   & t^{\gamma_1+1}C_1 - s^{\gamma_1+1}C_2 = 0, \\
    & t^{\gamma_i+1}C_i - s^{\gamma_i+1}C_{i+1} = 0, \ \ 2\le i\le n-z-3, \\
    & C_j = 0, \ \ n-z-1\le j\le n-2, \\
    & t^{n(d-2)+z+1-\gamma_{n-z-2}}C_{n-z-2} - s^{n(d-2)+z+1-\gamma_{n-z-2}}C_{n-1} = 0.
\end{align*}

We define the matrix $K$ whose columns give the relations above:
\[ K = \begin{pmatrix}
t^{\gamma_1+1} & 0 & 0 & \cdots & 0 & 0 & 0 & \cdots & 0 & 0 \\
-s^{\gamma_1+1} & t^{\gamma_2+1} & 0 & \cdots & 0 & 0 & 0 & \cdots & 0 & 0 \\
0 & -s^{\gamma_2+1} & t^{\gamma_3+1} & \cdots & 0 & 0 & 0 & \cdots & 0 & 0 \\
\vdots &  \vdots & \vdots & \ddots & \vdots & \vdots & \vdots & \ddots & \vdots & \vdots\\
0 &  0  & 0 & \cdots & t^{\gamma_{n-z-3}+1} & 0 & 0 & \cdots & 0 & 0\\
0 &  0  & 0 & \cdots & -s^{\gamma_{n-z-3}+1} & 0 & 0 & \cdots & 0 & t^{n(d-2)+z+1-\gamma_{n-z-2}}\\
0 &  0  & 0 & \cdots & 0 & 1 & 0 &\cdots & 0 & 0\\
0 &  0  & 0 & \cdots & 0 & 0 & 1 & \cdots & 0 & 0\\
\vdots &  \vdots & \vdots & \ddots & \vdots & \vdots & \vdots & \ddots & \vdots & \vdots \\
0 &  0  & 0 & \cdots & 0 & 0 & 0 & \cdots & 1 & 0\\
0 &  0  & 0 & \cdots & 0 & 0 & 0 & \cdots & 0 & -s^{n(d-2)+z+1-\gamma_{n-z-2}}
\end{pmatrix}. \]

Then $K$ defines a map
\[ K: \left (\bigoplus_{i=1}^{n-z-3} \cO(n+1-\gamma_i)\right )\oplus \cO(n+2)^z\oplus \cO(n+1 - n(d-2)-z+\gamma_{n-z-2})\to \cO(n+2)^{n-1}. \]

The column relations imply that the image of $K$ is contained in the kernel $N_{C/X}$ of $\psi_F$. Since $K$ has maximal rank, its image is a subbundle of $N_{C/X}$. As both $N_{C/X}$ and the subbundle have rank $n-2$, it follows that
\[ N_{C/X}\cong \cO(n+2)^z\oplus \left (\bigoplus_{i=1}^{n-z-3} \cO(n+1-\gamma_i)\right )\oplus \cO(n+1 - n(d-2)-z+\gamma_{n-z-2}).\]

Thus, we get splitting types of $N_{C/X}$ with exactly $z$ terms $\cO(n+2)$. By appropriately choosing the $\gamma_i\ge 0$ with $\gamma_{n-z-2} = \sum_{i=1}^{n-z-3}\gamma_i\le n(d-2)+z+1$, or equivalently, by choosing the $\beta_i$ in non-decreasing order, we get all possible choices for the summands of degree smaller than $n+2$. Note the last term is determined by the previous ones and the degree of $N_{C/X}$. Finally, by varying the number of zero entries $z$ between $0$ and $n-3$, we obtain all splitting types $E$ for $N_{C/X}$.

\medskip

Now, we investigate the case $e<n$. Similarly, consider polynomials $F$ of the form $F = \sum _{i=1}^{e-1}F_iQ_{i,i+1} + \sum _{j=e+1}^nG_jx_j$ with $F_1 = x_0^{d-2}$ and $G_n = x_e^{d-1}$, so that $N_{C/X}$ is the kernel of maps $\psi_F:\cO(e+2)^{e-1}\oplus \cO(e)^{n-e}\to \cO(de)$ of the form
\[ \psi_F = \left ( s^{e-2}(s^{e(d-2)}), \ (s^{e-3}t)F_2|_C, \cdots , t^{e-2}F_{e-1}|_C \ ; \ G_{e+1}|_C, \ \cdots , G_{n-1}|_C, \ t^{e(d-1)} \right ). \]

The approach in this case is similar. The main difference is that $N_{C/\P^n}$ splits into two summands of different degrees: $\cO(e+2)^{e-1}\oplus \cO(e)^{n-e}$. We separate $\psi_F$ into two parts as above, corresponding to the $F_i$ and $G_j$. We will choose the zero entries all in the first part so we can get the $\cO(e+2)$ terms of $E$.

First, we show that these $F$ define hypersurfaces smooth along the curve. Taking partial derivatives $\frac{\partial F}{\partial x_i}$ and restricting to the curve (letting $Q_{ij} = 0$ and $x_l = 0$, $l\ge e+1$), we obtain
\[ \frac{\partial F}{\partial x_2} = -x_0^{d-1} + 2x_2F_2 - x_4F_3 \ \ \ \text{ and } \ \ \ \frac{\partial F}{\partial x_n} = x_e^{d-1}. \]

So, if $F$ is singular at a point $P=(s^e : s^{e-1}t : \cdots : t^e : 0 : \cdots : 0)$ on $C$ then $x_e^{d-1} = 0$ implies $t=0$, thus $x_2 = x_4 =0$, and the first derivative gives $x_0^{d-1} = 0$, so $s=0$. Hence, all $F$ of this form are smooth along $C$.

As in the case $e=n$, we choose maps $\psi_F$ with $z$ zero entries and increasing exponents of $t$ to get relations between every two consecutive entries. This time, consider integers
\[ 0\le z\le e-2, \ \ 0\le \beta_2\le \beta_3\le \cdots \le \beta_{e-1-z} \ \text{ and } \ \beta_{e-1-z}-z+e\le \beta_e\le \cdots \le \beta_{n-2}\le e(d-1), \]
and let $F_i$ and $G_j$ be such that
\begin{align*}
& F_i|_C = s^{e(d-2)-\beta_i}t^{\beta_i} \text{ for } 2\le i\le e-1-z,\\
& F_i|_C = 0 \text{ for } e-z\le i\le e-1 \text{ and }\\
& G_j|_C = s^{e(d-1)-\beta_{j-1}}t^{\beta_{j-1}} \text{ for } e+1\le j\le n-1.
\end{align*}
so the map $\psi_F$ has the form
\begin{multline*}
\psi_F =  ( s^{e-2}(s^{e(d-2)}), \ \ (s^{e-3}t)(s^{e(d-2)-\beta_2}t^{\beta_2}), \ \cdots , \ (s^zt^{e-2-z})(s^{e(d-2)-\beta_{e-1-z}}t^{\beta_{e-1-z}}), \ \ 0, \ \cdots , \ 0 ; \\
(s^{e(d-1)-\beta_e}t^{\beta_e}), \ \ (s^{e(d-1)-\beta_{e+1}}t^{\beta_{e+1}}), \ \cdots , \ (s^{e(d-1)-\beta_{n-2}}t^{\beta_{n-2}}), \ \ t^{e(d-1)} ),
\end{multline*}

We obtain the following relations between the columns $C_1, \ldots , C_{e-1}; C_e, \ldots C_{n-1}$:
\begin{align*}
    & t^{\beta_2+1}C_1 - s^{\beta_2+1}C_2 = 0\\
    & t^{\beta_{i+1}-\beta_i+1}C_i - s^{\beta_{i+1}-\beta_i+1}C_{i+1} = 0, \text{ for } 2\le i\le e-2-z\\
    & C_i = 0, \text{ for } e-z\le i\le e-1\\
    & t^{\beta_e-\beta_{e-1-z}-e+2+z}C_{e-1-z} - s^{\beta_e-\beta_{e-1-z}+z-e}C_{e} = 0\\
    & t^{\beta_{j+1}-\beta_j}C_j - s^{\beta_{j+1}-\beta_j}C_{j+1} = 0, \text{ for } e\le j\le n-3\\
    & t^{e(d+1)-\beta_{n-2}}C_{n-2}-s^{e(d+1)-\beta_{n-2}}C_{n-1} = 0.
\end{align*}

As before, rename $\gamma_i = \beta_{i+1} - \beta_i$, $\gamma_1 = \beta_2$, $\gamma_{e-1} = \beta_e - \beta_{e-1-z} - e + z$ and $\gamma_{n-2} = \beta_{n-2}$. These satisfy the conditions $\gamma_i\ge 0$ for all $i$ and $\gamma_{n-2} = \sum_{i=1}^{n-2-z}\gamma_i + \sum_{j=e-1}^{n-3}\gamma_j\le e(d-1)$. Then the column relations become
\begin{align*}
    & t^{\gamma_2+1}C_1 - s^{\gamma_2+1}C_2 = 0\\
    & t^{\gamma_i+1}C_i - s^{\gamma_i+1}C_{i+1} = 0, \text{ for } 2\le i\le e-2-z\\
    & C_i = 0, \text{ for } e-z\le i\le e-1\\
    & t^{\gamma_{e-1}+2}C_{e-1-z} - s^{\gamma_{e-1}}C_{e} = 0\\
    & t^{\gamma_j}C_j - s^{\gamma_j}C_{j+1} = 0, \text{ for } e\le j\le n-3\\
    & t^{e(d+1)-\gamma_{n-2}}C_{n-2}-s^{e(d+1)-\gamma_{n-2}}C_{n-1} = 0.
\end{align*}

They induce the matrix

\[ K = \begin{pmatrix}
t^{\gamma_1+1} & 0 & \cdots & 0 & 0 & 0 & \cdots & 0 & 0 & 0 & \cdots & 0 & 0 \\
-s^{\gamma_1+1} & t^{\gamma_2+1} & \cdots & 0 & 0 & 0 & \cdots & 0 & 0 & 0 & \cdots & 0 & 0 \\
0 & -s^{\gamma_2+1} & \cdots & 0 & 0 & 0 & \cdots & 0 & 0 & 0  & \cdots & 0 & 0 \\
\vdots &  \vdots & \ddots & \vdots & \vdots & \vdots & \ddots & \vdots & \vdots & \vdots & \ddots & \vdots & \vdots \\
0 &  0  & \cdots & t^{\gamma_{e-z-2}+1} & 0 & 0 & \cdots & 0 & 0 & 0 & \cdots & 0 & 0\\
0 &  0  & \cdots & -s^{\gamma_{e-z-2}+1} & 0 & 0 & \cdots & 0 & t^{\gamma_{e-1}+2} & 0 & \cdots & 0 & 0\\
0 &  0  & \cdots & 0 & 1 & 0 &\cdots & 0 & 0 & 0 & \cdots & 0 & 0\\
0 &  0  & \cdots & 0 & 0 & 1 & \cdots & 0 & 0 & 0 & \cdots & 0 & 0\\
\vdots &  \vdots & \ddots & \vdots & \vdots & \vdots & \ddots & \vdots & \vdots  & \vdots & \ddots & \vdots & \vdots\\
0 &  0  & \cdots & 0 & 0 & 0 & \cdots & 1 & 0 & 0 & \cdots & 0 & 0\\
0 &  0  & \cdots & 0 & 0 & 0 & \cdots & 0 & -s^{\gamma_{e-1}} & t^{\gamma_e} & \cdots & 0 & 0 \\
0 &  0  & \cdots & 0 & 0 & 0 & \cdots & 0 & 0 & -s^{\gamma_e} & \cdots & 0 & 0 \\
\vdots &  \vdots & \ddots & \vdots & \vdots & \vdots & \ddots & \vdots & \vdots  & \vdots & \ddots & \vdots & \vdots\\
0 &  0  & \cdots & 0 & 0 & 0 & \cdots & 0 & 0 & 0 & \cdots & t^{\gamma_{n-3}} & 0\\
0 &  0  & \cdots & 0 & 0 & 0 & \cdots & 0 & 0 & 0 & \cdots & -s^{\gamma_{n-3}} & t^{e(d+1)-\gamma_{n-2}}\\
0 &  0  & \cdots & 0 & 0 & 0 & \cdots & 0 & 0 & 0 & \cdots & 0 & -s^{e(d+1)-\gamma_{n-2}}
\end{pmatrix}. \]

\medskip

And $K$ defines an injective map
\[ K: \left( \bigoplus _{i=1}^{e-2-z}\cO(e+1-\gamma_i) \right )
\oplus \cO(e+2)^{z}\oplus \left ( \bigoplus _{j=e-1}^{n-3}\cO(e-\gamma_j) \right ) \oplus \cO(e-e(d+1)+\gamma_{n-2})\to \cO(e+2)^{e-1}\oplus \cO(e)^{n-e}. \]

As before, the column relations imply that $K$ factors through the kernel of $\psi_F$, and consequently, the kernel is
\[ N_{C/X}\cong \cO(e+2)^{z}\oplus \left( \bigoplus _{i=1}^{e-2-z}\cO(e+1-\gamma_i) \right )
\oplus \left ( \bigoplus _{j=e-1}^{n-3}\cO(e-\gamma_j) \right ) \oplus \cO(e-e(d+1)+\gamma_{n-2}). \]

This is a splitting type with exactly $z$ terms $\cO(e+2)$, $e-2-z$ terms of degree less or equal than $e+1$, and the rest of them of degree less or equal than $e$. By varying the $\gamma_i$ with $\gamma_i\ge 0$ and $\gamma_{n-2} = \sum_{i=1}^{n-2-z}\gamma_i + \sum_{j=e-1}^{n-3}\gamma_j\le e(d-1)$, we get all splitting types $E$ of this form. Finally, by varying the number of zero entries $z$ between $0$ and $e-2$ we obtain all possible splitting types $E$ for $N_{C/X}$.

\medskip

(b) If $N_{C/X}$ does not have splitting type of the form $E$, then it has more than $e-2$ summands of degree greater than $e$. Remember from Section 2, that the map $\psi_F: \cO_{\P^1}(e+2)^{e-1}\oplus \cO_{\P^1}(e)^{n-e}\to \cO_{\P^1}(de)$ is given by a matrix $(C_1, \ \cdots \ , C_{e-1}; \ G_{e+1}|_C, \ \cdots \ , G_n|C)$. Consider this matrix divided into two parts: the first corresponding to the $C_i$, and the second to the $G_j|_C$.

As we have seen in the proof of part (a), any summand of $N_{C/X}$ of degree greater than $e$ must come from column relations involving only entries of the first part of $\psi_F$. However, since the first part has $e-1$ entries, we can obtain at most $e-2$ linearly independent relations from it, unless it is zero. Thus, we must have $C_i = 0$ for $i=1,\hdots , e-1$, and
\[ \psi_F = \left ( 0, \ 0, \cdots , 0 \ ; \ G_{e+1}|_C, \ \cdots , G_{n-1}|_C, \ G_n|_C \right ). \]
In this case, we get the column relations $C_i = 0$ for $1\le i\le e-1$, which will correspond to the summands $\cO(e+2)^{e-1}$. The remaining $n-e-1$ column relations will come from the second part of $\psi_F$. If $e=n-1$, this would mean $N_{C/X}\cong \cO(e+2)^{e-1}$, which cannot happen for degree reasons. Hence, $e<n-1$.

To simplify our approach, consider polynomials of the form $F = \sum _{j=e+1}^nG_jx_j$. Taking partial derivatives and restricting to $C$, we have
\[ \frac{\partial F}{\partial x_a} = \sum_{j=e+1}^n G_j|_C\frac{\partial x_j}{\partial x_a} = \left\{\begin{matrix}
0 \ \text{ if } a\le e-1\\ 
G_a|_C \ \text{ if } a\ge e+1
\end{matrix}\right., \]
so $F$ is singular at the intersection of the zero locus of the $G_j|_C$, $e+1\le j\le n$. We get explicit examples below.

Let $0\le \beta_{e+2}\le \beta_{e+3}\le \cdots \le \beta_{n-1}\le e(d-1)$, and choose $G_{e+1} = x_0^{d-1}$, $G_n = x_e^{d-1}$ and the other $G_j$ so that
\begin{align*}
    & G_{e+1}|_C = s^{e(d-1)}\\
    & G_j|_C = s^{e(d-1)-\beta_j}t^{\beta_j}, \text{ for } e+2\le j\le n-1\\
    & G_{n} = t^{e(d-1)}.
\end{align*}

First, notice these hypersurfaces $F$ are smooth along $C$, since $G_{e+1}|_C = G_n|_C = 0$ implies $s=t=0$.

For these $F$, the map $\psi_F$ has the form
\[ \psi_F = \left ( 0, \ 0, \cdots , 0 \ ; \ s^{e(d-1)}, \ s^{e(d-1)-\beta_{e+2}}t^{\beta_{e+2}} , \ \cdots , s^{e(d-1)-\beta_{n-1}}t^{\beta_{n-1}}, t^{e(d-1)} \right ). \]

To simplify notation, rename $\gamma_{j+1} = \beta_{j+1}-\beta_j$, $\gamma_{e+2} = \beta_{e+2}$ and $\gamma_{n} = e(d-1) - \beta_{n-1}$. Then, we get the following relations between the columns $C_1, \hdots , C_{e-1}; C_{e+1}, \hdots , C_n$ of $\psi_F$:
\begin{align*}
    & C_i = 0 \ \text{ for } 1\le i\le e-1\\
    & t^{\gamma_{j+1}}C_{j} - s^{\gamma_{j+1}}C_{j+1} = 0 \ \text{ for } e+1\le j\le n-1.
\end{align*}

Writing these relations in column vectors, we define the matrix
\[ K = \begin{pmatrix}
1 & 0 & \cdots & 0 & 0 & 0 & 0 & \cdots & 0 \\
0 & 1 & \cdots & 0 & 0 & 0 & 0 & \cdots & 0 \\
\vdots & \vdots  & \ddots & 0 & 0 & 0 & 0 & \cdots & 0 \\
0 & 0 & \cdots & 1 & 0 & 0 & 0 & \cdots & 0 \\
0 & 0 & \cdots & 0 & t^{\gamma_{e+2}} & 0 & 0 & \cdots & 0 \\
0 & 0 & \cdots & 0 & -s^{\gamma_{e+2}} & t^{\gamma_{e+3}} & 0 & \cdots & 0  \\
0 & 0 & \cdots & 0 & 0 & -s^{\gamma_{e+3}} & t^{\gamma_{e+4}} & \cdots & 0 \\
\vdots & \vdots & \vdots & \vdots & \vdots &  \vdots & \vdots & \ddots & \vdots\\
0 & 0 & 0 & 0 & 0 &  0  & 0 & \cdots & t^{\gamma_{n}} \\
0 & 0 & 0 & 0 & 0 &  0  & 0 & \cdots & -s^{\gamma_{n}}
\end{pmatrix}.\]

It induces the map
\[ K: \cO(e+2)^{e-1}\oplus \left ( \bigoplus_{j=e+2}^{n}\cO(e-\gamma_j) \right )\to \cO(e+2)^{e-1}\oplus \cO(e)^{n-e}. \]

The column relations imply that the image of $K$ is contained in the kernel $N_{C/X}$ of $\psi_F$. Thus, since $K$ has maximal rank $n-2$, it follows that
\[ N_{C/X}\cong \cO(e+2)^{e-1}\oplus \left ( \bigoplus_{j=e+2}^{n}\cO(e-\gamma_j) \right ). \]
By varying our choices of $\beta_i$, we can obtain all splitting types $E'$.
\end{proof}

\begin{corollary}\label{smoothexample}
(char $K=0$) Let $d\ge 4$, and assume the base field $K$ has characteristic $0$. For all splitting types $E$ and $E'$ as in the theorem, there exists a smooth hypersurface $X$ of degree $d$ containing the curve $C$ with normal bundle $N_{C/X}$ with that splitting type.
\end{corollary}
\begin{proof} Let $\psi_F\in \Hom (N_{C/\P^n},\cO(de))$ be the map with $N_{C/X}\cong E$, induced by the polynomial $F$ obtained in the theorem. From the exact sequence
\[ 0\longrightarrow H^0(\cI_C^2(d))\longrightarrow H^0(\cI_C(d))\overset{\phi}{\longrightarrow} \Hom (N_{C/\P^n},\cO(de)), \]
we have that the space of polynomials that induce the same $\psi_F$ is the coset 
\[ F+H^0(\cI_C^2(d)) = \{ F+G \ | \ G\in H^0(\cI_C^2(d)) \}. \]

If $d\ge 4$, then multiples of $Q_{ij}^2$ are in $H^0(\cI_C^2(d))$, so the base locus of $F + H^0(\cI_C^2(d))$ is the curve $C$. Thus, by Bertini's theorem, the general member of $F+H^0(\cI_C^2(d))$ is smooth away from $C$. As $F$ is smooth along $C$, the general member of $F+H^0(\cI_C^2(d))$ is smooth.
\end{proof}

\subsection{Dimension Count}

Theorem \ref{existence} shows that for all possible splitting types $E$ and $E'$ we can obtain a degree $d\ge 3$ hypersurface $X$ for which $N_{C/X}$ has the given splitting type. A natural question that arises is if we can compute the dimension of the space of such hypersurfaces, and if it is the expected one.

Let $E_{\vec{a}} = \bigoplus _{i=1}^{n-2}\cO(a_i)$ be a possible splitting type. If $E_{\vec{a}}$ is of the form $E$, the numerical conditions in Theorem \ref{existence} translate to $a_i\le e+2$ for $1\le i\le e-2$ and $a_j\le e$ for $e-1\le j\le n-2$. If $E'$, we have $a_i=e+2$ for $1\le i\le e-1$ and $a_j\le e$ for $e\le j\le n-2$. In both cases, $\sum_{i=1}^{n-2}a_i = e(n-d+1)-2$.

In the introduction, we defined the spaces
\[ \Sigma = \{ F \ | \ X \text{ is a degree } d \text{ hypersurface smooth along } C \}\subset H^0(\cO_{\P^n}(d)) \]
and
\[ \Sigma_{\vec{a}} = \{ F\in \Sigma \ | \ N_{C/X}\cong E_{\vec{a}} \}\subset \Sigma , \]
and observed that the expected codimension of $\Sigma_{\vec{a}}$ in $\Sigma$ is $h^1(\sEnd(E_{\vec{a}}))$.

We start by computing this codimension at the level of homomorphisms. For that, define
\[ \Phi_{\vec{a}} = \{ M\in \Hom (N_{C/\P^n}, \cO_{\P^1}(de)) \ | \ \ker M\cong E_{\vec{a}} \}\subset \Hom (N_{C/\P^n}, \cO_{\P^1}(de)). \]

\begin{proposition}\label{lemmaHom} The space $\Phi_{\vec{a}}$ is smooth and irreducible of codimension
\[ h^1(\sEnd (E_{\vec{a}})) - h^1(\sHom(E_{\vec{a}}, N_{C/\P^n})) \]
in $\Hom (N_{C/\P^n}, \cO_{\P^1}(de))$.
\end{proposition}
\begin{proof}
We first compute the dimension of $\Phi_{\vec{a}}$. Every $M$ in $\Phi_{\vec{a}}$ comes from an injection $\theta: E_{\vec{a}}\hookrightarrow N_{C/\P^n}$, and every injection defines an $M$, since its cokernel is a line bundle of degree $de$. By our numerical conditions on $\vec{a}$, $\sHom (E_{\vec{a}}, N_{C/\P^n})$ is globally generated, so the dimension of injections $\theta$ is $h^0(\sHom(E_{\vec{a}}, N_{C/\P^n}))$. However, each $M$ might come from different injections. To count these, consider the set of pairs of homomorphisms
\[ B = \{ (\theta, M)\mid E_{\vec{a}}\overset{\theta}{\hookrightarrow} N_{C/\P^n}\overset{M}{\twoheadrightarrow} \cO(de) \}\subset \Hom (E_{\vec{a}}, N_{C/\P^n})\times \Hom (N_{C/\P^n}, \cO(de)) \]
together with the projections $\pi_1$ and $\pi_2$:
\[ \begin{tikzcd}
& B \arrow[ld, "\pi_1"'] \arrow[rd, "\pi_2"] & \\
\Hom (E_{\vec{a}}, N_{C/\P^n}) & & \Hom (N_{C/\P^n}, \cO(de))
\end{tikzcd} \]

A fiber $\pi_1^{-1}(\theta)$ of the map $\pi_1$ corresponds to the $M$'s induced by $\theta$ up to a choice of basis for $\cO(de)$, thus, the fibers of $\pi_1$ are isomorphic to $\Aut(\cO(de))$. The image of $\pi_1$ is the open set of injections $\theta \in \Hom (E_{\vec{a}}, N_{C/\P^n})$. Hence, $B$ is smooth and irreducible of $\dim B = h^0(\sHom(E_{\vec{a}}, N_{C/\P^n})) + h^0(\sEnd(\cO(de)))$.

On the other hand, the image of $\pi_2$ is exactly the set $\Phi_{\vec{a}}$, and the fibers of $\pi_2$ are isomorphic to $\Aut (E_{\vec{a}})$. Thus, $\Phi_{\vec{a}}$ is smooth and irreducible, and
\[ \dim \Phi_{\vec{a}} = h^0(\sHom(E_{\vec{a}}, N_{C/\P^n})) + h^0(\sEnd(\cO(de))) - h^0(\sEnd(E_{\vec{a}})). \]

Therefore, its codimension in $\Hom (N_{C/\P^n}, \cO(de))$ is
\[ \codim \Phi_{\vec{a}} = h^0(\sHom (N_{C/\P^n}, \cO_{\P^1}(de))) - h^0(\sHom(E_{\vec{a}}, N_{C/\P^n})) + h^0(\sEnd(E_{\vec{a}})) - h^0(\sEnd(\cO(de))). \tag{$3$} \]

Now, applying the functors $\sHom (E_{\vec{a}}, - )$ and $\sHom (-, \cO(de))$ to the sequence
\[ 0\longrightarrow E_{\vec{a}}\longrightarrow N_{C/\P^n}\longrightarrow \cO(de)\longrightarrow 0 \]
we obtain the long exact sequences
\begin{align*}
     0\longrightarrow & \sHom(E_{\vec{a}}, E_{\vec{a}})\longrightarrow \sHom(E_{\vec{a}}, N_{C/\P^n})\longrightarrow \sHom(E_{\vec{a}}, \cO(de))\longrightarrow \\
     \longrightarrow & \sExt^1(E_{\vec{a}}, E_{\vec{a}})\longrightarrow \sExt^1(E_{\vec{a}}, N_{C/\P^n})\longrightarrow \sExt^1(E_{\vec{a}}, \cO(de)) = 0
\end{align*}
and
\[ 0\longrightarrow \sHom(\cO(de), \cO(de))\longrightarrow \sHom(N_{C/\P^n}, \cO(de))\longrightarrow \sHom(E_{\vec{a}}, \cO(de))\longrightarrow 0. \]

Observe that $\sExt^1(E_{\vec{a}}, \cO(de)) = 0$ since $a_i\le e+2\le de$ for all $1\le i\le n-1$.
Computing the alternating sum on dimensions of the first sequence, and substituting it on $(3)$, we get
\begin{multline*}
\codim \Phi_{\vec{a}} = h^0(\sHom (N_{C/\P^n}, \cO_{\P^1}(de))) - h^0(\sHom(E_{\vec{a}}, \cO(de))) + h^1(\sEnd(E_{\vec{a}}))\\
- h^1(\sHom(E_{\vec{a}}, N_{C/\P^n})) - h^0(\sEnd(\cO(de))).
\end{multline*}

And from the second sequence it follows that
\begin{align*}
    \codim \Phi_{\vec{a}} = & h^0(\sEnd(\cO(de))) + h^0(\sHom(E_{\vec{a}}, \cO(de))) - h^0(\sHom(E_{\vec{a}}, \cO(de))) \\
    & \hspace{2.5cm}  + h^1(\sEnd(E_{\vec{a}})) - h^1(\sHom(E_{\vec{a}}, N_{C/\P^n})) - h^0(\sEnd(\cO(de)))\\
    = & h^1(\sEnd(E_{\vec{a}})) - h^1(\sHom(E_{\vec{a}}, N_{C/\P^n})).
\end{align*}
\end{proof}

\begin{corollary}\label{corhom} If $e=n$ or $a_i\le e+1$ for all $i$, then the codimension of $\Phi_{\vec{a}}$ in $\Hom (N_{C/\P^n}, \cO_{\P^1}(de))$ is $h^1(\sEnd(E_{\vec{a}}))$.
\end{corollary}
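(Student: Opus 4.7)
The plan is to reduce the claim to Proposition \ref{lemmaHom}, which already gives
\[ \codim \Phi_{\vec{a}} = h^1(\End(E_{\vec{a}})) - h^1(\sHom(E_{\vec{a}}, N_{C/\P^n})), \]
so it suffices to verify that the correction term $h^1(\sHom(E_{\vec{a}}, N_{C/\P^n}))$ vanishes under either of the two hypotheses. This is a purely numerical check on $\P^1$.

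First I would invoke Corollary \ref{normalbundlee} to write $N_{C/\P^n}\cong \cO(e+2)^{e-1}\oplus \cO(e)^{n-e}$ and then expand
\[ \sHom(E_{\vec{a}}, N_{C/\P^n}) \cong \bigoplus_{i=1}^{n-2}\cO(e+2-a_i)^{e-1}\oplus \bigoplus_{i=1}^{n-2}\cO(e-a_i)^{n-e}. \]
Since $h^1(\P^1, \cO(k))=0$ exactly when $k\ge -1$, the vanishing $h^1(\sHom(E_{\vec{a}}, N_{C/\P^n}))=0$ reduces to checking that every summand appearing above has degree $\ge -1$.

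Next I would split into the two cases stated in the corollary. If $e=n$, the second direct sum is empty, and the first sum has degrees $e+2-a_i\ge 0$ by the numerical constraint $a_i\le e+2$ from Theorem \ref{existence}; hence $h^1=0$. If instead $a_i\le e+1$ for all $i$, then in the first sum $e+2-a_i\ge 1$, and in the second sum $e-a_i\ge -1$, so again every summand has degree $\ge -1$ and $h^1$ vanishes.

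There is no real obstacle here beyond bookkeeping; the substantive content is already packaged in Proposition \ref{lemmaHom}, and the corollary is just the observation that the two hypotheses are precisely the regimes in which the degrees of the summands of $\sHom(E_{\vec{a}}, N_{C/\P^n})$ stay above $-2$. Substituting $h^1(\sHom(E_{\vec{a}}, N_{C/\P^n}))=0$ into the formula of Proposition \ref{lemmaHom} then yields $\codim \Phi_{\vec{a}} = h^1(\End(E_{\vec{a}}))$, as required.
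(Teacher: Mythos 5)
Your proposal is correct and follows essentially the same route as the paper: the paper's one-line proof of Corollary \ref{corhom} likewise just observes that $N_{C/\P^n}\cong \cO(e+2)^{e-1}\oplus \cO(e)^{n-e}$ forces $h^1(\sHom(E_{\vec{a}}, N_{C/\P^n}))=0$ under either hypothesis and then substitutes into Proposition \ref{lemmaHom}. Your version merely spells out the degree bookkeeping ($h^1(\P^1,\cO(k))=0$ for $k\ge -1$) that the paper leaves implicit.
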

\begin{proof}
In this case, since $N_{C/\P^n}\cong \cO(e+2)^{e-1}\oplus \cO(e)^{n-e}$, we have $h^1(\sHom(E_{\vec{a}}, N_{C/\P^n})) = 0$.
\end{proof}

We can compute the dimension of the balanced splitting type, recovering a case of \cite{CR}, Corollary 2.2.

\begin{corollary}\label{phiissurjective} (\cite{CR}, Corollary 2.2) 
The kernel of the general $M\in \Hom (N_{C/\P^n}, \cO(de))$ is balanced.
\end{corollary}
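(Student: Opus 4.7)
The plan is to apply Proposition~\ref{lemmaHom} (specifically, its Corollary~\ref{corhom}) to the balanced splitting type $E_{\vec{a}}$ and show that $\Phi_{\vec{a}}$ has codimension zero in $\Hom(N_{C/\P^n},\cO(de))$, hence is open dense there. This reduces the corollary to the codimension formula already proved plus two short numerical facts.

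First I would observe that, for any balanced $E_{\vec{a}}=\bigoplus_{i=1}^{n-2}\cO(a_i)$, the sheaf $\End(E_{\vec{a}})\cong\bigoplus_{i,j}\cO(a_i-a_j)$ has every summand of degree $\ge -1$ on $\P^1$, so $h^1(\End(E_{\vec{a}}))=0$. Next I would verify the hypothesis $a_i\le e+1$ of Corollary~\ref{corhom}. The balanced splitting of a rank-$(n-2)$, degree-$(e(n-d+1)-2)$ bundle on $\P^1$ has maximum summand of degree $\lceil (e(n-d+1)-2)/(n-2)\rceil$; a short arithmetic check using $d\ge 3$ shows this is at most $e$. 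Corollary~\ref{corhom} then yields $\codim\Phi_{\vec{a}}=h^1(\End(E_{\vec{a}}))=0$.

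To conclude, balancedness is an open condition in any family of rank-$(n-2)$, degree-$(e(n-d+1)-2)$ vector bundles on $\P^1$, so $\Phi_{\vec{a}}$ is open in the irreducible space $\Hom(N_{C/\P^n},\cO(de))$. It is nonempty: Proposition~\ref{balancedexample} produces a hypersurface $X$ whose normal bundle $N_{C/X}$ is balanced, so the associated map $\psi_F$ lies in $\Phi_{\vec{a}}$ (alternatively, one can take the cokernel of any injection $E_{\vec{a}}\hookrightarrow N_{C/\P^n}$, which exists since $\sHom(E_{\vec{a}},N_{C/\P^n})$ is globally generated under our degree bounds). Hence $\Phi_{\vec{a}}$ is dense in $\Hom(N_{C/\P^n},\cO(de))$, and the general $M$ has balanced kernel. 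The only piece of real content beyond citing Proposition~\ref{lemmaHom} is the numerical verification that $\max a_i\le e+1$ in the balanced splitting type; the rest is formal.
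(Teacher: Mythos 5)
Your proposal is correct and follows essentially the same route as the paper: apply Corollary~\ref{corhom} to the balanced splitting type, note that $h^1(\End(E_{\vec{a}}))=0$ since all summands of $\End(E_{\vec{a}})$ have degree $\ge -1$, and check the bound $a_i\le e+1$ (the paper bounds the maximal summand by $\lceil(e(n-d+1)-2)/(n-2)\rceil\le e+1$, while you sharpen this to $\le e$ using $d\ge 3$; either suffices). The extra remarks on openness and nonemptiness are fine but not needed beyond what the paper records.
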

\begin{proof}
For $E_{\vec{a}}$ the balanced splitting type, we have $a_i\le \left \lceil \frac{e(n-d+1)-2}{n-2} \right \rceil \le e+1$ and $|a_i-a_j|\le 1$ for all $i,j$, thus
\[ \codim \Phi_{\vec{a}} = h^1(\sEnd(E_{\vec{a}})) = 0. \]

Hence $\Phi_{\vec{a}}$ has dimension equal to $\dim \Hom (N_{C/\P^n}, \cO(de))$.
\end{proof}

\begin{theorem}\label{teoremasigmaa} The locus $\Sigma_{\vec{a}}$ is irreducible and smooth of codimension $h^1(\sEnd(E_{\vec{a}})) - h^1(\sHom(E_{\vec{a}}, N_{C/\P^n}))$ in $\Sigma$.
\end{theorem}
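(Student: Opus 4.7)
The plan is to transfer all claims through the surjective linear map
$\phi: H^0(\cI_{C/\P^n}(d))\to \Hom(N_{C/\P^n},\cO(de))$,
which for $d\ge 3$ is a surjection of vector spaces with kernel $H^0(\cI_{C/\P^n}^2(d))$ by \cite{CR}. My first step is to identify $\Sigma_{\vec{a}}$ with $\phi^{-1}(\Phi_{\vec{a}})\cap \Sigma$. This follows because $N_{C/X}=\ker \phi(F)$, so $F\in\Sigma_{\vec{a}}$ exactly when $\phi(F)\in \Phi_{\vec{a}}$ and $V(F)$ is smooth along $C$. Non-emptiness comes from Theorem \ref{existence}.

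Next I would observe that since $\phi$ is a surjective linear map between affine spaces, it is a trivial affine bundle with fiber $H^0(\cI_{C/\P^n}^2(d))$. Therefore $\phi^{-1}(\Phi_{\vec{a}})$ has the same codimension in $H^0(\cI_{C/\P^n}(d))$ as $\Phi_{\vec{a}}$ does in $\Hom(N_{C/\P^n},\cO(de))$, and inherits irreducibility and smoothness from $\Phi_{\vec{a}}$. Intersecting with the open subset $\Sigma \subset H^0(\cI_{C/\P^n}(d))$ preserves codimension, smoothness, and irreducibility (once we know $\Sigma_{\vec{a}}\neq \emptyset$, which is guaranteed by Theorem \ref{existence}). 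Proposition \ref{lemmaHom} then supplies the codimension formula $h^1(\End(E_{\vec{a}}))-h^1(\sHom(E_{\vec{a}},N_{C/\P^n}))$.

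It remains to prove $\Phi_{\vec{a}}$ is smooth and irreducible. For this I would reuse the incidence variety
\[ B=\{(\theta,M)\mid E_{\vec{a}}\overset{\theta}{\hookrightarrow} N_{C/\P^n}\overset{M}{\twoheadrightarrow}\cO(de)\} \]
from the proof of Proposition \ref{lemmaHom}, together with its two projections $\pi_1,\pi_2$. The image of $\pi_1$ is the open locus of injections inside $\Hom(E_{\vec{a}},N_{C/\P^n})$, which is smooth and irreducible; the fiber of $\pi_1$ over an injection $\theta$ is the space of surjections $N_{C/\P^n}\twoheadrightarrow \cO(de)$ with kernel $\theta(E_{\vec{a}})$, which is a torsor under $\Aut(\cO(de))=\mathbb{G}_m$. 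Hence $B$ is smooth and irreducible. The second projection $\pi_2:B\to \Phi_{\vec{a}}$ is a surjection whose fibers are the orbits of the free $\Aut(E_{\vec{a}})$-action on $B$ by precomposition on $\theta$; since $\Aut(E_{\vec{a}})$ is a smooth connected algebraic group, $\pi_2$ is a smooth morphism and $\Phi_{\vec{a}}$ is smooth and irreducible.

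The main obstacle I anticipate is the last step: carefully equipping $\Phi_{\vec{a}}$ with a scheme structure (as a locally closed subscheme of $\Hom(N_{C/\P^n},\cO(de))$ cut out by splitting-type conditions on the kernel) and justifying that $\pi_2$ is a smooth principal $\Aut(E_{\vec{a}})$-bundle rather than just a set-theoretic orbit map. Once that is in place, everything else is formal manipulation with affine bundles and the open subset $\Sigma$.
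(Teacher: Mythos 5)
Your proposal is correct and follows essentially the same route as the paper: both identify $\Sigma_{\vec{a}}$ as the preimage of $\Phi_{\vec{a}}$ under the surjective map $\phi$ (an affine bundle with fiber $H^0(\cI_{C/\P^n}^2(d))$), invoke Proposition \ref{lemmaHom} and the incidence variety $B$ with its two projections for the codimension and irreducibility of $\Phi_{\vec{a}}$, and use Theorem \ref{existence} plus openness of smoothness along $C$ to pass to $\Sigma_{\vec{a}}$. Your added care about the scheme structure on $\Phi_{\vec{a}}$ and the principal $\Aut(E_{\vec{a}})$-bundle structure of $\pi_2$ is a reasonable refinement of a point the paper treats more briefly, not a different argument.
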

\begin{proof}
Let $\overline{\Sigma_{\vec{a}}}$ be the space of all polynomials $F\in H^0(\cI_C(d))$, not necessarily smooth along $C$, whose kernel of $\psi_F$ has splitting type $E_{\vec{a}}$. Consider the map $\beta :\overline{\Sigma_{\vec{a}}} \to \Phi_{\vec{a}}$, $\beta (F) = \psi _F$. By \cite{CR} Theorem 3.1, the map $\phi : H^0(\cI_C(d))\to \Hom (N_{C/\P^n},\cO(de))$ is surjective. It follows that $\beta$ is also surjective. In addition, by the short exact sequence 
\[ 0\longrightarrow H^0(\cI_{C/\P^n}^2(d))\longrightarrow H^0(\cI_{C/\P^n}(d))\overset{\phi}{\longrightarrow} \Hom (N_{C/\P^n}, \cO(de))\longrightarrow 0, \]
the fiber $\beta ^{-1}(M)$ is isomorphic to the linear system $H^0(\cI_C^2(d))$. Thus, $\beta ^{-1}(M)$ is smooth and irreducible of dimension $h^0(\cI^2_C(d))$. Then, by Proposition \ref{lemmaHom}, it follows that $\overline{\Sigma_{\vec{a}}}$ is smooth and irreducible of dimension
\[ h^0(\sHom(N_{C/\P^n}, \cO(de))) - h^1(\sEnd(E_{\vec{a}})) + h^1(\sHom(E_{\vec{a}}, N_{C/\P^n})) + h^0(\cI^2_C(d)). \]

Since $F$ being smooth along $C$ is an open condition in $\overline{\Sigma_{\vec{a}}}$, and $\Sigma_{\vec{a}}$ is not empty by Theorem \ref{existence}, it follows that $\Sigma_{\vec{a}}$ is an open dense subset of $\overline{\Sigma_{\vec{a}}}$. Therefore, $\Sigma_{\vec{a}}$ is irreducible and smooth of the same dimension.

The dimension of $\Sigma$ is $h^0(\cI_C(d))$, and by the sequence above,
\[ h^0(\cI^2_C(d)) = h^0(\cI_C(d)) - h^0(\sHom(N_{C/\P^n}, \cO(de))). \]

Hence, the codimension of $\Sigma _{\vec{a}}$ in $\Sigma$ is
\[ \codim (\Sigma_{\vec{a}}\subset \Sigma) = h^1(\sEnd(E_{\vec{a}})) - h^1(\sHom(E_{\vec{a}}, N_{C/\P^n})). \]
\end{proof}

By Corollary \ref{smoothexample}, when $d\ge4$ the general hypersurface of $\Sigma_{\vec{a}}$ is smooth.

\begin{corollary}
(char $K=0$) Let $d\ge 4$, and assume the base field $K$ has characteristic $0$. Let $S\Sigma_{\vec{a}}$ be the subspace of $\Sigma_{\vec{a}}$ of polynomials $F$ with $X$ smooth. Then $S\Sigma_{\vec{a}}$ is irreducible and smooth of codimension $h^1(\sEnd(E_{\vec{a}})) - h^1(\sHom(E_{\vec{a}}, N_{C/\P^n}))$ in $\Sigma$.
\end{corollary}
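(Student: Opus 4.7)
The plan is to realize $S\Sigma_{\vec{a}}$ as a non-empty open subset of $\Sigma_{\vec{a}}$ and then simply inherit the structural conclusions from Theorem \ref{teoremasigmaa}. The point is that all the real work has already been done: Theorem \ref{teoremasigmaa} gives irreducibility, smoothness, and the codimension count for $\Sigma_{\vec{a}}$, while Corollary \ref{smoothexample} guarantees that the ``$X$ smooth'' condition actually cuts out something non-empty.

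Concretely, I would argue as follows. First observe that the locus of $F\in H^0(\cO_{\P^n}(d))$ defining a smooth hypersurface $V(F)$ is open, since its complement is the image of the projective discriminant-type incidence $\{(F,p) : F(p)=0,\ \partial_iF(p)=0 \text{ for all } i\}\subset H^0(\cO_{\P^n}(d))\times \P^n$, which is closed. Intersecting with $\Sigma_{\vec{a}}$, we conclude that $S\Sigma_{\vec{a}}$ is open in $\Sigma_{\vec{a}}$. Second, Corollary \ref{smoothexample} produces, for $d\ge 4$ and $\mathrm{char}\,K=0$, an actual smooth hypersurface $X$ of degree $d$ containing $C$ with $N_{C/X}\cong E_{\vec{a}}$ (using the Bertini argument on the coset $F+H^0(\cI_C^2(d))$, whose base locus is $C$ once $d\ge 4$). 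Hence $S\Sigma_{\vec{a}}\neq\emptyset$.

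Now combine these: $S\Sigma_{\vec{a}}$ is a non-empty open subset of the irreducible, smooth variety $\Sigma_{\vec{a}}$ of Theorem \ref{teoremasigmaa}. A non-empty open subset of an irreducible space is dense and irreducible, and an open subset of a smooth scheme is smooth. Density implies $\dim S\Sigma_{\vec{a}} = \dim \Sigma_{\vec{a}}$, so the codimension of $S\Sigma_{\vec{a}}$ in $\Sigma$ equals that of $\Sigma_{\vec{a}}$ in $\Sigma$, namely $h^1(\End(E_{\vec{a}})) - h^1(\sHom(E_{\vec{a}}, N_{C/\P^n}))$.

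There is essentially no obstacle beyond bookkeeping; both the non-emptiness input (Corollary \ref{smoothexample}) and the structural input (Theorem \ref{teoremasigmaa}) are already established, and the only subtle hypothesis -- that $d\ge 4$ and $\mathrm{char}\,K=0$ -- is used solely to invoke Bertini in Corollary \ref{smoothexample}.
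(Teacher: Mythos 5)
Your argument is correct and is exactly the one the paper intends: the paper derives this corollary from Theorem \ref{teoremasigmaa} together with the remark that, by Corollary \ref{smoothexample}, the general member of $\Sigma_{\vec{a}}$ is smooth when $d\ge 4$, so $S\Sigma_{\vec{a}}$ is a non-empty open subset of the irreducible smooth locus $\Sigma_{\vec{a}}$ and inherits its irreducibility, smoothness, and codimension. No further comment is needed.
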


As in Corollary \ref{corhom}, we get the expected codimension when $e=n$ or all $a_i$ are smaller than $e+2$.

\begin{corollary} If $e=n$ or $a_i\le e+1$ for all $i$, then the codimension of $\Sigma_{\vec{a}}$ in $\Sigma$ is the expected $h^1(\sEnd(E_{\vec{a}}))$.
\end{corollary}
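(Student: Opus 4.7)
The plan is to deduce this directly from Theorem \ref{teoremasigmaa}, which says
\[
\codim(\Sigma_{\vec{a}}\subset \Sigma) = h^1(\End(E_{\vec{a}})) - h^1(\sHom(E_{\vec{a}}, N_{C/\P^n})),
\]
so it suffices to check that the hypotheses $e=n$ or $a_i\le e+1$ for all $i$ force $h^1(\sHom(E_{\vec{a}}, N_{C/\P^n}))=0$. This is exactly the statement of Corollary \ref{corhom} (applied to the $\Phi_{\vec{a}}$ computation), so the corollary is really a cosmetic re-packaging of that cohomological vanishing.

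To make the vanishing explicit: using $N_{C/\P^n}\cong \cO(e+2)^{e-1}\oplus \cO(e)^{n-e}$ and $E_{\vec{a}}=\bigoplus_{i=1}^{n-2}\cO(a_i)$, one writes
\[
\sHom(E_{\vec{a}}, N_{C/\P^n}) = \bigoplus_{i=1}^{n-2}\bigl(\cO(e+2-a_i)^{\oplus(e-1)}\oplus \cO(e-a_i)^{\oplus(n-e)}\bigr),
\]
and a line bundle on $\P^1$ has vanishing $h^1$ iff its degree is $\ge -1$. If $e=n$ then the second block is empty and the first block has degree $e+2-a_i\ge 0$ since $a_i\le e+2$, so $h^1$ vanishes. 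If instead $a_i\le e+1$ for all $i$, then $e+2-a_i\ge 1$ and $e-a_i\ge -1$, so every summand again has $h^1=0$.

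There is essentially no obstacle; the only thing to verify is that the numerical constraints on $\vec{a}$ recorded at the start of the subsection (namely $a_i\le e+2$ for $1\le i\le e-2$ and $a_j\le e$ for $e-1\le j\le n-2$) are consistent with the two hypotheses, which they visibly are. Plugging $h^1(\sHom(E_{\vec{a}}, N_{C/\P^n}))=0$ into the codimension formula from Theorem \ref{teoremasigmaa} yields $\codim(\Sigma_{\vec{a}}\subset \Sigma) = h^1(\End(E_{\vec{a}}))$, as desired.
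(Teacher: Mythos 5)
Your proposal is correct and matches the paper's own reasoning: the corollary is obtained by plugging the vanishing $h^1(\sHom(E_{\vec{a}}, N_{C/\P^n}))=0$ (which is exactly the content of Corollary \ref{corhom}, and which you verify by the same line-bundle degree check on the summands of $E_{\vec{a}}^{\vee}\otimes(\cO(e+2)^{e-1}\oplus\cO(e)^{n-e})$) into the codimension formula of Theorem \ref{teoremasigmaa}. Nothing is missing.
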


When $e < n$ and there exist terms $a_i = e+2$, the expected and the actual codimension differ. We can compute this difference as follows.

\begin{corollary}\label{difference} Let $z = |\{ i \ | \ a_i = e+2 \}|$ be the number of terms $\cO(e+2)$ in the splitting type $E_{\vec{a}}$. Then the codimension of $\Sigma_{\vec{a}}$ in $\Sigma$ is $h^1(\sEnd(E_{\vec{a}})) - (n-e)z$.
\end{corollary}
\begin{proof}
By Theorem \ref{teoremasigmaa} and Serre duality for $\P^1$,
\begin{align*}
 h^1(\sHom(E_{\vec{a}}, N_{C/\P^n})) & = h^1(\sHom(E_{\vec{a}}, \cO(e+2)^{e-1}\oplus \cO(e)^{n-e}))  \\ 
    & = h^0(\sHom(\cO, (\cO(-e-2)^{e-1}\oplus \cO(-e)^{n-e})\otimes (\oplus_{i=1}^{n-2} \cO(a_i))\otimes \cO(-2)))  \\
    & = \sum_{i=1}^{n-2} h^0(\P^1, \cO(-e-4+a_i)^{e-1}\oplus \cO(-e-2+a_i)^{n-e}) = (n-e)z.
\end{align*}
\end{proof}

In particular, the difference between the actual and the expected codimension can get arbitrarily large as $n$ grows.

\section{Quadric Hypersurfaces}

In this section, we study the case $d=2$. Let $X = V(F)$ be a degree 2 hypersurface in $\P^n$ containing the rational normal curve $C$ of degree $e$, and consider the exact sequence of the map $\phi$ described in Section 2:
\[ 0\longrightarrow H^0(\cI^2_C(2))\longrightarrow H^0(\cI_C(2))\overset{\phi}{\longrightarrow} \Hom (N_{C/\P^n}, \cO_{\P^1}(2e)). \tag{4} \]

Unlike when $d\ge 3$, the map $\phi$ is not surjective. Thus, the cokernel of an injection $E_{\vec{a}}\hookrightarrow N_{C/\P^n}$ may not be in the image of $\phi$, so we cannot repeat the arguments from Proposition \ref{lemmaHom} to compute the dimension of $\Sigma_{\vec{a}}$. Nevertheless, we can compute the dimension of the image of $\phi$.

\begin{proposition}\label{propdim} The dimension of $H^0(\cI_C^2(2))$ is $\frac{(n-e)(n-e+1)}{2}$. In particular, $\phi$ is injective when $e=n$.

\end{proposition}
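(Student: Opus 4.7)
The plan is to identify $H^0(\cI_C^2(2))$ with the space $W \subset H^0(\cO_{\P^n}(2))$ of quadrics in the ``transverse'' variables $x_{e+1}, \ldots, x_n$ cutting out the linear span $L = \P^e$ of $C$. Since $W$ has a basis consisting of the monomials $x_k x_l$ with $e+1 \le k \le l \le n$, its dimension is $\binom{n-e+1}{2} = \frac{(n-e)(n-e+1)}{2}$, which will give the stated formula. The injectivity of $\phi$ for $e=n$ is then an immediate corollary, since $W = 0$ in that case.

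One inclusion is tautological: every element of $W$ is a sum of products of two linear forms in $\cI_L \subset \cI_C$, so $W \subset H^0(\cI_L^2(2)) \subset H^0(\cI_C^2(2))$. For the reverse, take $F \in H^0(\cI_C^2(2))$ and split it by monomial type as $F = F_0 + F_1 + F_2$, where $F_0 \in K[x_0, \ldots, x_e]_2$ collects the pure ``$\le e$'' monomials, $F_2 \in K[x_{e+1}, \ldots, x_n]_2$ collects the pure ``$> e$'' monomials (so $F_2 \in W$), and $F_1 = \sum_{k > e} \tilde{L}_k x_k$ is the mixed part with $\tilde{L}_k$ linear in $x_0, \ldots, x_e$. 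After subtracting $F_2$ it suffices to show $F_0 = F_1 = 0$.

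The main input is that $\psi_F = \phi(F) = 0$ since $F \in \cI_C^2$. Write $F$ in terms of ideal generators as $F = \sum F_{ij} Q_{ij} + \sum_{k > e} G_k x_k$ with $F_{ij} \in K$ and $G_k$ linear, arranged so that $F_0 = \sum F_{ij} Q_{ij}$ is the unique expansion in the linearly independent generators $Q_{ij}$ of the quadrics through $R_e$ in $\P^e$, and so that $G_k|_C = \tilde{L}_k|_C$. The matrix description of $\psi_F$ from Section 2 reads
\[
\psi_F = (c_1, \ldots, c_{e-1};\ G_{e+1}|_C, \ldots, G_n|_C), \qquad c_l = \sum_{i \le l < j} F_{ij}\, s^{e-j-i+l} t^{j+i-l-2}.
\]
The vanishing $G_k|_C = 0$ forces $\tilde{L}_k = 0$, since $R_e$ is non-degenerate in $\P^e$ and hence the only linear form on $\P^e$ vanishing on $C$ is zero; thus $F_1 = 0$.

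The main obstacle is then extracting $F_{ij} = 0$ from the vanishing of every $c_l$. I would argue by induction on $l$. In $c_1$ only the pairs $(1, j)$ with $j \ge 2$ appear, and $F_{1j}$ is precisely the coefficient of the distinct monomial $s^{e-j} t^{j-2}$; hence $c_1 = 0$ gives $F_{1j} = 0$ for all $j$. Inductively, once $F_{i'j'} = 0$ is known for all $i' < l$, the surviving terms in $c_l$ are indexed only by $(l, j)$ with $j > l$, and the map $j \mapsto (e - j,\, j - 2)$ injects into distinct monomials, so $c_l = 0$ isolates each $F_{lj}$ and forces it to vanish. The crux is the combinatorial bookkeeping: tracking the bidegree $(e-j-i+l,\, j+i-l-2)$ of the monomials contributed by each pair $(i, j)$ and confirming that coincidences with smaller indices $i' < l$ involve only $F_{i'j'}$ already killed. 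With the induction complete, $F_0 = 0$, and therefore $F = F_2 \in W$, finishing the proof.
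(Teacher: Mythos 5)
Your proof is correct, but it takes a genuinely different route from the paper's. The paper's argument is a two-line geometric one: a quadric $F$ lying in $\cI_C^2$ is singular along $C$, the singular locus of a quadric is a linear space, so it contains the span $\P^e$ of $C$; a quadric singular along $V(x_{e+1},\dots,x_n)$ is a quadratic form in $x_{e+1},\dots,x_n$ alone, and counting those monomials gives $\binom{n-e+1}{2}$ at once (with injectivity of $\phi$ for $e=n$ falling out because the count is zero). You instead identify $H^0(\cI_C^2(2))$ with $\ker\phi$ and compute that kernel directly from the matrix description of $\psi_F$: the vanishing of the entries $G_k|_C$ kills the mixed part $F_1$ by nondegeneracy of $R_e$, and an induction on the column index $l$, using that for fixed $i=l$ the pairs $(l,j)$ contribute the distinct monomials $s^{e-j}t^{j-2}$ to $c_l$, kills the coefficients $F_{ij}$ one row at a time. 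Both arguments are sound; yours is longer and purely algebraic but has the merit of exhibiting explicitly the injectivity of $\phi$ on the $\P^e$-part (the substance of the ``in particular'' clause when $e=n$) rather than deducing it from a dimension being zero, while the paper's is shorter precisely because it exploits the special geometry of quadrics --- that the singular locus is linear --- which is the very feature that makes $d=2$ exceptional in this paper.
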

\begin{proof}
If $G\in H^0(\cI_C^2(2))$ then $G$ is double along $C$. Since $G$ is a quadric, and the singular locus of a quadric is a linear space, $G$ must also be singular on all the $\P^e$ spanned by $C$. Thus, $G$ must be a combination of the generators $x_ix_j$, $e+1\le i,j\le n$ of $\cI^2_{\P^e}(2)$. That is,
    \[ G = \sum _{e+1\le i\le j\le n} \lambda_{ij}x_ix_j, \ \ \lambda_{ij}\in K. \]
    
    So we got to choose $\frac{(n-e)(n-e+1)}{2}$ coefficients to define $G$. Thus, $h^0(\cI_C^2(2)) = \frac{(n-e)(n-e+1)}{2}$.
\end{proof}

\begin{corollary} The image of $\phi$ has dimension $\frac{2ne + 2n -3e - e^2}{2}$.
\end{corollary}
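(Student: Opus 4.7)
The plan is to read the dimension of the image of $\phi$ directly off the exact sequence
\[ 0\longrightarrow H^0(\cI^2_C(2))\longrightarrow H^0(\cI_C(2))\overset{\phi}{\longrightarrow} \Hom (N_{C/\P^n}, \cO_{\P^1}(2e)), \]
which gives $\dim \operatorname{image}(\phi) = h^0(\cI_C(2)) - h^0(\cI^2_C(2))$. The second term is $\frac{(n-e)(n-e+1)}{2}$ by the preceding proposition, so the entire task reduces to computing $h^0(\cI_C(2))$.

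For this I would use the ideal sheaf sequence
\[ 0\longrightarrow \cI_C(2)\longrightarrow \cO_{\P^n}(2)\longrightarrow \cO_C(2)\longrightarrow 0. \]
By Lemma \ref{projectivelynormal} (projective normality of the rational normal curve), the induced map $H^0(\cO_{\P^n}(2))\to H^0(\cO_C(2))\cong H^0(\cO_{\P^1}(2e))$ is surjective, so
\[ h^0(\cI_C(2)) = \binom{n+2}{2} - (2e+1) = \frac{n^2+3n-4e}{2}. \]
Subtracting $h^0(\cI^2_C(2)) = \frac{(n-e)(n-e+1)}{2}$ and simplifying gives
\[ \frac{n^2+3n-4e - (n^2 - 2ne + e^2 + n - e)}{2} = \frac{2ne+2n-3e-e^2}{2}, \]
as claimed. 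There is no real obstacle here: the corollary is a purely bookkeeping consequence of the proposition combined with projective normality of $C$.
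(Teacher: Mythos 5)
Your proposal is correct and follows essentially the same route as the paper: both read $\dim(\operatorname{im}\phi) = h^0(\cI_C(2)) - h^0(\cI_C^2(2))$ off the exact sequence, compute $h^0(\cI_C(2)) = \binom{n+2}{2} - (2e+1)$ via projective normality, and subtract the preceding proposition's value of $h^0(\cI_C^2(2))$. The only difference is that you spell out the appeal to Lemma \ref{projectivelynormal} explicitly, which the paper leaves implicit.
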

\begin{proof}
From the exact sequence $(4)$, $\dim (\mathrm{im} \ \phi ) = h^0(\cI_C(2)) - h^0(\cI^2_C(2))$.
Thus, by Proposition \ref{propdim},
\[ \dim (\mathrm{im} \ \phi) = \dbinom{n+2}{2} - (2e+1) - \frac{(n-e)(n-e+1)}{2} = \frac{2ne + 2n -3e - e^2}{2}. \]
\end{proof}

\subsection{Splitting Types}

As in the case $d\ge 3$, we start by asking which splitting types arise as the normal bundle $N_{C/X}$. We obtain examples of quadrics smooth along $C$ for each possible splitting type.

The normal bundle sequence in the case $d=2$ is
\[ 0\longrightarrow N_{C/X}\longrightarrow \cO(e+2)^{e-1}\oplus \cO(e)^{n-e}\overset{\psi_F}{\longrightarrow}\cO(2e)\longrightarrow 0 \]
and $\deg N_{C/X} = e(n-1)-2$. Then, any splitting type for $N_{C/X}$ must have the form of a direct sum $\left ( \bigoplus_{i=1}^{e-1}\cO(e+2-a_i) \right )\oplus \left ( \bigoplus _{j=e+2}^{n}\cO(e-b_j)\right )$ with $a_i, b_j\ge 0$ and $\sum_{i=1}^{e-1}a_i+\sum_{j=e+2}^nb_j = e$. As in the case $d\ge 3$, all splitting types with at most $e-2$ terms of degree greater than $e$ are achieved. Splitting types with a term $\cO(e+2)^{e-1}$ come from hypersurfaces containing the $e$-plane spanned by $C$.

\begin{theorem}\label{quadricprop1}
\leavevmode
\begin{enumerate}[label=(\alph*)]
    \item For any given splitting type of the form
    \[ E = \left ( \bigoplus_{i=1}^{e-2}\cO(e+2-a_i) \right )\oplus \left ( \bigoplus _{j=e+1}^{n}\cO(e-b_j)\right ), \] with $a_i, b_j\ge 0$ and $\sum_{i=1}^{e-2}a_i + \sum_{j=e+1}^nb_j = e-2$, we produce a quadric $X=V(F)$, smooth along $C$, for which $N_{C/X}\cong E$.

    More explicitly, rearrange the $a_i$ in non-decreasing order $0 = a_y < a_{y+1}\le \cdots \le a_{e-2}$, and let $\beta_0 = 0$ and $\beta_i = a_{y+1} + \cdots + a_{y+i}$ for $i=1, \ldots , e-2-y$:
    
    For $e=n$, the quadric $X$ given by the polynomial
    \[ F = \sum_{i=0}^{n-2-y}Q_{\beta_i+1,\beta_i+2} =  Q_{1,2} + Q_{\beta_1 + 1, \beta_1 +2} + Q_{\beta_2 + 1, \beta_2 + 2} +  \cdots + Q_{\beta_{n-3-y}+1, \beta_{n-3-y}+2} + Q_{n-1,n}, \]
    where $Q_{ij} = x_ix_{j-1}-x_{i-1}x_j$, has normal bundle $N_{C/X}\cong E$.
    
    For $e<n$, let also $\gamma_n = e$ and $\gamma_{j} = e - b_n - b_{n-1} - \cdots - b_{j+1}$ for $e+1\le j\le n-1$. Then a quadric $X$ such that $N_{C/X}\cong E$ is given by
    \begin{align*}
        F = & \sum_{i=0}^{e-2-y}Q_{\beta_i+1,\beta_i+2} + \sum_{j=e+1}^n x_{\gamma_j}x_j\\
        = & (Q_{1,2} + Q_{\beta_1 + 1, \beta_1 +2} + \cdots + Q_{\beta_{e-2-y}+1, \beta_{e-2-y}+2}) + (x_{\gamma_{e+1}}x_{e+1} + \cdots + x_{\gamma_{n-1}}x_{n-1} + x_ex_n).
    \end{align*}
    
    \item If the splitting type of $N_{C/X}$ contains $e-1$ terms of degree greater than $e$, then it must be of the form
    \[ E' = \cO(e+2)^{e-1}\oplus \left ( \bigoplus _{j=e+2}^{n}\cO(e-b_j)\right ), \]
    with $b_j\ge 0$, $\sum_{j=e+2}^n b_j = e$ and $e<n-1$. In this case, $X$ contains the $e$-plane spanned by $C$. We produce a quadric $X$, smooth along $C$, for which $N_{C/X}\cong E'$.
    
    More explicity, let $\gamma_n = e$ and $\gamma_j = e-b_n-b_{n-1}-\cdots -b_{j+1}$ for $e+1\le j\le n-1$, and let $X$ be the quadric given by the polynomial
    \[ F = \sum_{j=e+1}^n x_{\gamma_j}x_j = x_0x_{e+1} + x_{b_{e+2}}x_{e+2} + \cdots + x_{\gamma_{n-1}}x_{n-1} + x_ex_n. \]
    Then $N_{C/X}\cong E'$.

\end{enumerate}
\end{theorem}

\begin{examplenotitalic}
Before we proceed to the proof of the theorem, let us use an example to better illustrate the idea of the proof, and to show how we are using the relations between the entries of $\psi_F$ to compute its kernel.

Let $n=e=5$. Consider polynomials $F$ of the form $F = \lambda_1Q_{1,2} + \lambda_2Q_{2,3} + \lambda_3Q_{3,4} + \lambda_4Q_{4,5}$. So $F$ induces the map $\psi_F: \cO(7)^{4}\to \cO(10)$,
\[ \psi_F = (\lambda_1s^3, \ \lambda_2s^2t, \ \lambda_3st^2, \ \lambda_4t^3). \]

When $\lambda_i = 1$, $i=1,\dots ,4$, $\psi_F = (s^3, \ s^2t, \ st^2, \ t^3)$ and we get the following degree $1$ relations between consecutive entries of $\psi_F$, which we call \textit{column relations} of $\psi_F$:
\begin{align*}
    & t(s^3) - s(s^2t) = 0\\
    & t(s^2t) - s(st^2) = 0\\
    & t(st^2) - s(t^3) = 0\\
\end{align*}
Writing the coefficients of these relations as column vectors, we get the matrix $K$
\[ K = \begin{pmatrix}
t & 0 & 0\\
-s & t & 0\\
0 & -s & t\\
0 & 0 & -s
\end{pmatrix}, \]
which defines a map $\cO(6)^3\overset{K}{\to} \cO(7)^4$. The column relations imply that the image of $K$ is contained in the kernel of $\psi_F$. As $K$ has rank 3, the map is injective and it coincides with the kernel. Hence, $N_{C/X}\cong \cO(6)^3$.

Note that the splitting type of $N_{C/X}$ is determined by the degrees of the column relations of $\psi_F$. So, if we wish to get $N_{C/X}\cong \cO(5)\oplus \cO(6)\oplus \cO(7)$, we need relations of degree $0$, $1$ and $2$. Consecutive nonzero entries give degree $1$ relations. Similarly, entries separated by one entry give a degree $2$ relation. Then, let $\lambda_3 = 0$, so $\psi_F = (s^3, \ s^2t, \ 0, \ t^3)$. It satisfies the column relations
\begin{align*}
    & t(s^3) - s(s^2t) = 0\\
    & t^2(s^2t) - s^2(t^3) = 0
\end{align*}
and similarly, we define the matrix
\[ K = \begin{pmatrix}
t & 0 & 0\\
-s & 0 & t^2\\
0 & 1 & 0\\
0 & 0 & -s^2
\end{pmatrix}, \]
and obtain that $N_{C/X}\cong \cO(6)\oplus \cO(7)\oplus \cO(5)\overset{K}{\to} \cO(7)^4$ is the kernel of $\psi_F$.

Finally, the splitting type $\cO(4)\oplus \cO(7)^2$ must come from a $\psi_F$ that has a column relation of degree $3$. We get it by letting two consecutive entries of $\psi_F$ be $0$, that is, let $\psi_F = (s^3, \ 0, \ 0, \ t^3)$.

In summary, for every term $\cO(n+2-a_i)$ of the splitting type, we need a corresponding column relation of degree $a_i$. We can get it by letting $a_i-1$ consecutive terms of $\psi_F$ be $0$. By doing it for all $i$, we get all needed relations.   
\end{examplenotitalic}

\medskip
\begin{proofofthm4.3}
(a) Let first $e=n$. In this case, the possible splitting types are $E = \bigoplus_{i=1}^{n-2}\cO(n+2-a_i)$ with $a_i\ge 0$ and $\sum_{i=1}^{n-2}a_i = n-2$. We look at polynomials $F$ of the form $F = \sum_{i=1}^{n-1} \lambda_iQ_{i,i+1}$ with $\lambda_i\in \{ 0, 1 \}$ for all $i$. Then, $F$ induces the map $\psi_F: \cO(n+2)^{n-1}\to \cO(2n)$,
\[ \psi_F = (\lambda_1 s^{n-2}, \ \lambda_2 s^{n-3}t, \cdots , \lambda_i s^{n-1-i}t^{i-1}, \cdots , \lambda_{n-1}t^{n-2}). \]

As in the examples above, we want to find a $\psi_F$ with column relations of degrees $a_i$. We can get it by letting $a_i-1$ consecutive entries of $\psi_F$ be $0$ for every $a_i\ge 1$. So, write the $a_i$ in non-decreasing order $0 = a_y < a_{y+1}\le \cdots \le a_{n-2}$, and let $\beta_0 = 0$ and $\beta_i = a_{y+1} + \cdots + a_{y+i}$ for $i=1, \ldots , n-2-y$. So,
\[ \psi_F = (s^{n-2}, \ 0, \dots , 0, \ s^{n-2-\beta_1}t^{\beta_1},\ 0, \dots , 0,\  s^{n-2-\beta_2}t^{\beta_2}, \ \ \cdots \ \ , \ s^{n-2-\beta_{n-3}}t^{\beta_{n-3}}, \ 0, \dots , 0 , \ t^{n-2}). \]

Notice that $\psi_F$ is induced by the polynomial
\[ F = \sum_{i=0}^{n-2-y}Q_{\beta_i+1,\beta_i+2} =  Q_{1,2} + Q_{\beta_1 + 1, \beta_1 +2} + Q_{\beta_2 + 1, \beta_2 + 2} \cdots + Q_{\beta_{n-3-y}+1, \beta_{n-3-y}+2} + Q_{n-1,n}. \]

The entries of $\psi_F$ satisfy the degree $a_{i+1}$ relations
\begin{align*}
 & t^{\beta_{i+1}-\beta_i}(s^{n-2-\beta_i}t^{\beta_i}) - s^{\beta_{i+1}-\beta_i}(s^{n-2-\beta_{i+1}}t^{\beta_{i+1}})\\
 & = t^{a_{y+i+1}}(s^{n-2-\beta_i}t^{\beta_i}) - s^{a_{y+i+1}}(s^{n-2-\beta_{i+1}}t^{\beta_{i+1}}) \ \ \text{ for } \ \ 0\le i\le n-3-y,
\end{align*}
in addition to the $y$ degree $0$ relations corresponding to the zero entries of $\psi_F$.

Since each relation involves either a different pair of nonzero columns, or a single zero column, they are linearly independent, that is, the matrix $K$ whose columns are the coefficients of the relations has maximal rank. Therefore, $K$ gives the kernel of $\psi_F$, hence 
\[ N_{C/X}\cong \cO(n+2)^y\oplus \left (\bigoplus_{i=1}^{n-2-y}\cO(n+2-a_{y+i})\right ) = \bigoplus_{i=1}^{n-2}\cO(n+2-a_i). \]

Finally, we show that these $F$ are smooth along the rational normal curve. Consider the partial derivatives of $F$
\[ \frac{\partial F}{\partial x_0} = -x_2, \ \ \frac{\partial F}{\partial x_2} = 2\lambda_2x_2 - x_0 - \lambda_3x_4, \ \ \frac{\partial F}{\partial x_{n-2}} = 2x_{n-2} - x_n - \lambda_{n-3}x_{n-4}. \]
If $F$ is singular at a point $P = (s^n: s^{n-1}t: \cdots : t^n)\in C$, then $\frac{\partial F}{\partial x_0} = 0$ at $P$ implies $s=0$ or $t=0$. If $s=0$, then $\frac{\partial F}{\partial x_{n-2}} = 0$ implies $t=0$; and if $t=0$ then $\frac{\partial F}{\partial x_2} = 0$ implies $s=0$.  Hence, $F$ is smooth along $C$.

\medskip

Now, let $e<n$. We want to obtain all the possible splitting types $E = \left ( \bigoplus_{i=1}^{e-2}\cO(e+2-a_i) \right )\oplus \left ( \bigoplus _{j=e+1}^{n}\cO(e-b_j)\right )$. Consider polynomials $F$ of the form $F = \sum_{i=1}^{n-1}\lambda_iQ_{i,i+1} + \sum_{j=e+1}^{n}L_jx_j$ with $\lambda_i\in \{0,1\}$ and $L_j$ linear forms. Then $F$ induces the map $\psi_F: \cO(e+2)^{e-1}\oplus \cO(e)^{n-e}\to \cO(2e)$,
\[ \psi_F = (\lambda_1 s^{n-2}, \ \lambda_2 s^{n-3}t, \cdots , \lambda_i s^{n-1-i}t^{i-1}, \cdots , \lambda_{n-1}t^{n-2}; \ L_{e+1}|_C, L_{e+2}|_C, \cdots , L_n|_C). \]

We separate $\psi_F$ into two parts, one corresponding to the $\lambda_is^{n-1-i}t^{i-1}$ and the other to the $L_j|_C$. The idea is to use the first part to obtain all the terms $\cO(e+2-a_i)$ of $E$ as in the case $e=n$, and to use the second part for the $\cO(e-b_j)$ terms. 

Consider the $a_i$ in non-decreasing order $0 = a_y < a_{y+1}\le \cdots \le a_{e-2}$, and let $\beta_0 = 0$ and $\beta_i = a_{y+1} + \cdots + a_{y+i}$ for $i=1, \ldots , e-2-y$. Let also $\gamma_n = e$ and $\gamma_{j} = e - b_n - b_{n-1} - \cdots - b_{j+1}$ for $e+1\le j\le n-1$. Consider the map
\begin{multline*}
    \psi_F = (s^{n-2}, \ 0, \dots , 0, \ s^{n-2-\beta_1}t^{\beta_1},\ 0, \dots , 0,\  s^{e-2-\beta_2}t^{\beta_2}, \ \ \cdots \ \ , \ s^{e-2-\beta_{e-2-y}}t^{\beta_{e-2-y}}, \ 0, \dots 0 ;\\
     \ \ s^{e-\gamma_{e+1}}t^{\gamma_{e+1}}, \ s^{e-\gamma_{e+2}}t^{\gamma_{e+2}}, \dots , s^{e-\gamma_{n-1}}t^{\gamma_{n-1}}, \ t^e ).
\end{multline*}

Notice $\psi_F$ is induced by the polynomial
\begin{align*}
    F = & \sum_{i=0}^{e-2-y}Q_{\beta_i+1,\beta_i+2} + \sum_{j=e+1}^n x_{\gamma_j}x_j\\
    = & (Q_{1,2} + Q_{\beta_1 + 1, \beta_1 +2} + \cdots + Q_{\beta_{e-2-y}+1, \beta_{e-2-y}+2}) + (x_{\gamma_{e+1}}x_{e+1} + \cdots + x_{\gamma_{n-1}}x_{n-1} + x_ex_n).
\end{align*}

We obtain the following relations between the entries of the first part of $\psi_F$:
\begin{align*}
 & t^{\beta_{i+1}-\beta_i}(s^{e-2-\beta_i}t^{\beta_i}) - s^{\beta_{i+1}-\beta_i}(s^{e-2-\beta_{i+1}}t^{\beta_{i+1}}) = 0\\
 & \Leftrightarrow t^{a_{y+i+1}}(s^{e-2-\beta_i}t^{\beta_i}) - s^{a_{y+i+1}}(s^{e-2-\beta_{i+1}}t^{\beta_{i+1}}) = 0 \ \ \text{ for } \ \ 0\le i\le e-3,
\end{align*}
in addition to the $y$ degree $0$ relations corresponding to the zero entries.

And between every two consecutive entries of the second part of $\psi_F$:
\begin{align*}
 & t^{\gamma_{j+1}-\gamma_j}(s^{e-\gamma_j}t^{\gamma_j}) - s^{\gamma_{j+1}-\gamma_j}(s^{e-\gamma_{j+1}}t^{\gamma_{j+1}}) = 0\\
 & \Leftrightarrow t^{b_{j+1}}(s^{e-\gamma_j}t^{\gamma_j}) - s^{b_{j+1}}(s^{e-\gamma_{j+1}}t^{\gamma_{j+1}}) = 0 \ \ \text{ for } \ \ e+1\le j\le n-1.
\end{align*}

Note that $\gamma_{e+1}-\beta_{e-2-y} = e - \left (\sum_{i=1}^{e-2} a_i + \sum_{j=e+2}^n b_j\right ) = e - (e-2-b_{e+1}) = b_{e+1}+2$. We also get a relation between the first and second part of $\psi_F$:
\begin{align*}
 & t^{\gamma_{e+1}-\beta_{e-2-y}}(s^{e-2-\beta_{e-2-y}}t^{\beta_{e-2-y}}) - s^{\gamma_{e+1}-\beta_{e-2-y}-2}(s^{e-\gamma_{e+1}}t^{\gamma_{e+1}}) = 0 \\
 & \Leftrightarrow t^{b_{e+1}+2}(s^{e-2-\beta_{e-2-y}}t^{\beta_{e-2-y}}) - s^{b_{e+1}}(s^{e-\gamma_{e+1}}t^{\gamma_{e+1}}) = 0.
\end{align*}

The matrix $K$ defined by the coefficients of these relations defines a map
\[ K: \left ( \bigoplus_{i=1}^{e-2}\cO(e+2-a_i) \right )\oplus \left ( \bigoplus _{j=e+1}^{n}\cO(e-b_j)\right )\to \cO(e+2)^{e-1}\oplus \cO(e)^{n-e} \]
that factors through the kernel $N_{C/X}$ of $\psi_F$. Since the relations involve different pairs of nonzero columns, or single zero columns, $K$ has maximum rank, so the map is injective, and it follows that
\[ N_{C/X}\cong \left ( \bigoplus_{i=1}^{e-2}\cO(e+2-a_i) \right )\oplus \left ( \bigoplus _{j=e+1}^{n}\cO(e-b_j)\right ). \]

To conclude, we check that $F$ is smooth along the curve. Consider the partial derivatives restricted to $C$ (that is, take $Q_{ij} = 0$ and $x_l = 0$ for $l\ge e+1$)
\[ \frac{\partial F}{\partial x_2} = -x_0 + 2\lambda_2x_2 - \lambda_3x_4, \ \ \frac{\partial F}{\partial x_{n}} = x_e. \]
If $P = (s^e : s^{e-1}t : \cdots : t^e : 0 : \cdots : 0)\in C$ is a singular point of $X$, then $\frac{\partial F}{\partial x_{n}} = x_e = t^e = 0$ implies $t=0$, thus $\frac{\partial F}{\partial x_2} = -x_0 + 2\lambda_2x_2 - \lambda_3x_4 = -s^e = 0$, so $s=0$. Hence, $X$ is smooth along $C$.

\medskip
(b) As we have seen in part (a), the summands of degree greater than $e$ come from column relations involving only entries of the first part of the map $\psi_F$, as otherwise it defines a nonzero map to an $\cO(e)$ term. Since the first part has $e-1$ entries, we can only obtain these $e-1$ relations if the first part is zero, that is, if $\psi_F$ has the form
\[ \psi_F = (0, \ 0, \hdots ,\ 0; \ L_{e+1}|_C, \ L_{e+2}|_C, \hdots , \ L_n|_C). \]
The column relations $C_i = 0$ for $1\le i\le e-1$ induce the summand $\cO(e+2)^{e-1}$ of $N_{C/X}$.

Now, from Section 2, equation (1), notice that any map $\psi_F$ has the form $(C_1, \cdots , C_{e-1}; L_{e+1}|_C, \cdots , L_n|_C)$, and that the $C_i$ in the first part do not depend on the $L_j|_C$, that is, they are the same $C_1, \hdots , C_{e-1}$ we get in the case $e=n$ with $F$ the corresponding polynomial without the $L_jx_j$ terms. Thus, since $\phi$ is injective when $e=n$ by Proposition \ref{propdim}, the only polynomials $F$ with $C_1 = \cdots = C_{e-1} = 0$ are the polynomials of the form $F = \sum _{j=e+1}^n L_jx_j$. Therefore, $X$ contains the $e$-plane spanned by $C$.

If $e = n-1$, then we would have $N_{C/X}\cong \cO(e+2)^{e-1}$, which cannot happen for degree reasons. So, we assume $e < n-1$, and let 
\[ F = \sum_{j=e+1}^n x_{\gamma_j}x_j = x_0x_{e+1} + x_{b_{e+2}}x_{e+2} + \cdots + x_{\gamma_{n-1}}x_{n-1} + x_ex_n, \]
with $\gamma_n = e$ and $\gamma_j = e-b_n-b_{n-1}-\cdots -b_{j+1}$ for $e+1\le j\le n-1$. Then the map $\psi_F: \cO(e+2)^{e-1}\oplus \cO(e)^{n-e}\to \cO(2e)$ is
\begin{align*}
\psi_F & = (0, \ 0, \cdots ,\ 0; \ x_0|_C, \ x_{b_{e+2}}|_C, \cdots , x_{\gamma_{n-1}}|_C , \ x_n|_C)\\
 & = (0, \ 0, \cdots ,\ 0; \ s^e, \ s^{e-\gamma_{e+2}}t^{\gamma_{e+2}}, \cdots , s^{e-\gamma_{n-1}}t^{\gamma_{n-1}} , \ t^e).
\end{align*}

We obtain the following relations between the columns of the second part of $\psi_F$:
\begin{align*}
 & t^{\gamma_{j+1}-\gamma_j}(s^{e-\gamma_j}t^{\gamma_j}) - s^{\gamma_{j+1}-\gamma_j}(s^{e-\gamma_{j+1}}t^{\gamma_{j+1}}) = 0\\
 & \Leftrightarrow t^{b_{j+1}}(s^{e-\gamma_j}t^{\gamma_j}) - s^{b_{j+1}}(s^{e-\gamma_{j+1}}t^{\gamma_{j+1}}) = 0 \ \ \text{ for } \ \ e+1\le j\le n-1.
\end{align*}

Thus, these relations define a map
\[ K: \cO(e+2)^{e-1}\oplus \left ( \bigoplus _{j=e+2}^{n}\cO(e-b_j)\right )\to \cO(e+2)^{e-1}\oplus \cO(e)^{n-e} \]
that factors through the kernel of $\psi_F$. As the relations are independent, $K$ has maximum rank, and we have
\[ N_{C/X}\cong \cO(e+2)^{e-1}\oplus \left ( \bigoplus _{j=e+2}^{n}\cO(e-b_j)\right ). \]

Finally, taking partial derivatives and restricting to $C$, we have
\[ \frac{\partial F}{\partial x_{e+1}} = s^e \ \ \text{and} \ \ \frac{\partial F}{\partial x_n} = t^e, \]
so if a point $P=(s^e : \cdots : t^e : 0: \cdots :0)$ in $C$ is a singular point of $X$, we would have $s=t=0$. Therefore, $X$ is smooth along $C$.
\end{proofofthm4.3}

\medskip
A natural question that arises is whether we can obtain all splitting types from smooth quadric hypersurfaces $X$. The construction above produce quadrics smooth along $C$, but not necessarily smooth. We can, however, use the quadratic form matrix of $F$ to reduce the dimension of the singular locus of $X$.

\begin{theorem}\label{corankexample}
\leavevmode
\begin{enumerate}[label=(\alph*)]
    \item For every splitting type
    \[ E = \left ( \bigoplus_{i=1}^{e-2}\cO(e+2-a_i) \right )\oplus \left ( \bigoplus _{j=e+1}^{n}\cO(e-b_j)\right ), \]
    with $a_i, b_j\ge 0$ and $\sum_{i=1}^{e-2}a_i + \sum_{j=e+1}^nb_j = e-2$, we obtain an example of quadric $X$ of corank at most $\sum _{a_i\ge 4}(a_i-3)$ with $N_{C/X}\cong E$. In particular, if $a_i\le 3$ for all $i$, there exists a smooth quadric $X$ with $N_{C/X}\cong E$.
    \medskip
    \item For splitting types of the form
    \[ E' = \cO(e+2)^{e-1}\oplus \left ( \bigoplus _{j=e+2}^{n}\cO(e-b_j)\right ), \]
    with $b_j\ge 0$ and $\sum _{j=e+2}^n b_j = e$, let $w = |\{ j \ | \ b_j = 0 \}|$ be the number of terms of degree $e$. Then we obtain a quadric $X$ of corank $e+1 - \mathrm{min}\{e+1, n-e-w\}$ with $N_{C/X}\cong E'$. In particular, if $e+1\le n-e-w$, then there exists a smooth quadric $X$ with $N_{C/X}\cong E'$.
\end{enumerate}
\end{theorem}
\begin{proof} (a) We first refer to the proof of Theorem \ref{quadricprop1}, as we will use the quadric $F$ constructed there. We divide the proof into the cases $e=n$ and $e<n$.

Let $e=n$. Rearrange the $a_i$ in non-decreasing order $0 = a_y < a_{y+1}\le \cdots \le a_{n-2}$, and follow the construction of $F$ in the proof of Theorem \ref{quadricprop1}. For $\beta_0 = 0$ and $\beta_i = a_{y+1} + \cdots + a_{y+i}$ for $i=1, \ldots , n-2-y$, we obtain the polynomial
\[ F = \sum_{i=0}^{n-2-y}Q_{\beta_i+1,\beta_i+2} =  Q_{1,2} + Q_{\beta_1 + 1, \beta_1 +2} + Q_{\beta_2 + 1, \beta_2 + 2} \cdots + Q_{\beta_{n-3-y}+1, \beta_{n-3-y}+2} + Q_{n-1,n}. \]
corresponding to the map
\[ \psi_F = (s^{n-2}, \ 0, \dots , 0, \ s^{n-2-\beta_1}t^{\beta_1},\ 0, \dots , 0,\  s^{n-2-\beta_2}t^{\beta_2}, \ \ \cdots \ \ , \ s^{n-2-\beta_{n-3}}t^{\beta_{n-3}}, \ 0, \dots , 0 , \ t^{n-2}), \]
whose kernel is $N_{C/X}\cong \bigoplus_{i=1}^{n-2}\cO(n+2-a_i)$.

We look at the quadratic form matrix of $F$. It is the $(n+1)\times (n+1)$ symmetric matrix $Q = (c_{i,j})_{i,j=0}^n$ with entries $c_{\beta_{i}+1,\beta_{i}+1} = 1$, $c_{\beta_{i},\beta_{i}+2} = -\frac{1}{2}$, $c_{\beta_{i}+2,\beta_{i}} = -\frac{1}{2}$, $0\le i\le n-2-y$, and zero elsewhere. It has 1's and 0's in the diagonal with $a_i-1$ consecutive 0's for each $a_i$, in increasing order of $i$. For example, for $n=5$ and $N_{C/X}\cong \cO(5)\oplus \cO(6)\oplus \cO(7)$, we have $a_1=0$, $a_2=1$ and $a_3=2$, and we construct $\psi_F = (s^3,\ s^2t,\ 0,\ t^3)$, that corresponds to the matrix
\[ Q = \left (\begin{smallmatrix}
0 & 0 & -\frac{1}{2} & 0 & 0 & 0 \\
0 & 1 & 0 & -\frac{1}{2} & 0 & 0 \\
-\frac{1}{2} & 0 & 1 & 0 & 0 & 0 \\
0 & -\frac{1}{2} & 0 & 0 & 0 & -\frac{1}{2} \\
0 & 0 & 0 & 0 & 1 & 0 \\
0 & 0 & 0 & -\frac{1}{2} & 0 & 0 
\end{smallmatrix}\right ). \]

Notice that, for each $a_i\ge 4$, we have a $(a_i-3)\times (a_i-3)$ zero diagonal block in $Q$. Thus, the rank of $F$ drops by at least $a_i-3$ for each relation of degree $a_i\ge 4$. The nonzero diagonal blocks might be singular, further reducing the rank of $F$. To prove the theorem, we will show we can replace the nonzero blocks by nonsingular ones without changing the splitting type of $N_{C/X}$.

First, notice that the first diagonal block of $Q$ corresponds to the $a_i\le 2$. This block is followed by $(a_i-3)\times (a_i-3)$ zero diagonal blocks alternating with blocks
\[ B = \begin{pmatrix}
0 & 0 & -\frac{1}{2}\\
0 & 1 & 0\\
-\frac{1}{2} & 0 & 0
\end{pmatrix}. \]

The blocks $B$ are already nonsingular, so we do not need to replace them. We might need to replace the first block. When we replace it by a new block, we change $F$ to an $F'$, and we need to check if the new $\psi_{F'}$ induces the same bundle $N_{C/X}$. We do this by checking that $\psi_{F'}$ has column relations of the same degree as of $\psi_F$.

Let us analyze the first block closer. It has size $(m+1)\times (m+1)$, for $m = \sum_{a_i\le 2}a_i + 2$. Its diagonal is formed by 0, a sequence of consecutive 1's, and then alternating 1's and 0's:
\[ B_1 = \left ( \begin{smallmatrix}
0 &  & -\frac{1}{2} &  &  &  &  &  &  &  & \\
 & 1 &  & -\frac{1}{2} &  &  &  &  &  &  & \\
-\frac{1}{2} &  & 1 &  &  &  &  &  &  &  & \\
 & -\frac{1}{2} &  & ... &  & -\frac{1}{2} &  &  &  &  & \\
 &  &  &  & 1 &  &  &  &  &  & \\
 &  &  & -\frac{1}{2} &  & 0 &  & -\frac{1}{2} &  &  & \\
 &  &  &  &  &  & 1 &  &  &  & \\
 &  &  &  &  & -\frac{1}{2} &  & ... &  &  & \\
 &  &  &  &  &  &  &  & 0 &  & -\frac{1}{2}\\
 &  &  &  &  &  &  &  &  & 1 & \\
 &  &  &  &  &  &  &  & -\frac{1}{2} &  & 0
\end{smallmatrix}\right ). \]

Let $l = |\{ i \ | \ a_i = 1 \}| + 1$ be the number of consecutive 1's in the diagonal. We divide into the cases $l=1, 2, 3$ and $l\ge 4$.

For the case $l=1$, the diagonal of $B_1$ alternates between $0$ and $1$, thus $m$ must be even. If $4\nmid m$, Gauss-Jordan elimination shows that the original matrix is nonsingular and we do not change it. If $4\mid m$, it may be singular, and we define $F' = F + Q_{\frac{m}{2}, \frac{m}{2}+1} + Q_{\frac{m}{2}-1, \frac{m}{2}+1}$. This change will replace the three middle entries of $\psi_F$:
\[ \psi_F = (s^{n-2}, \ 0, \ s^{n-4}t^2, \ 0, \cdots , s^{n-\frac{m}{2}}t^{\frac{m}{2}-2}, \ 0, \ s^{n-\frac{m}{2}-2}t^{\frac{m}{2}}, \cdots ,\ 0, \ s^{n-m}t^{m-2} \cdots ) \]
by
\begin{multline*}
    \psi_{F'} = (s^{n-2}, \ 0, \ s^{n-4}t^2, \ 0, \cdots , s^{n-\frac{m}{2}}t^{\frac{m}{2}-2} + s^{n-\frac{m}{2}-2}t^{\frac{m}{2}}, \ s^{n-\frac{m}{2}-1}t^{\frac{m}{2}-1}, \ s^{n-\frac{m}{2}-2}t^{\frac{m}{2}} + s^{n-\frac{m}{2}}t^{\frac{m}{2}-2}, \cdots \\
    \cdots ,\ 0, \ s^{n-m}t^{m-2} \cdots ),
\end{multline*}
and instead of the original column relations
\begin{align*}
    & t^2(s^{n-\frac{m}{2}}t^{\frac{m}{2}-2}) - s^2(s^{n-\frac{m}{2}-2}t^{\frac{m}{2}}) = 0\\
    & 1\cdot (s^{n-\frac{m}{2}}t^{\frac{m}{2}-2}) - 1\cdot 0 = 0,
\end{align*}
we get
\begin{align*}
    & (st)(s^{n-\frac{m}{2}}t^{\frac{m}{2}-2} + s^{n-\frac{m}{2}-2}t^{\frac{m}{2}}) - (s^2+t^2)(s^{n-\frac{m}{2}-1}t^{\frac{m}{2}-1}) = 0\\
    & 1\cdot (s^{n-\frac{m}{2}}t^{\frac{m}{2}-2} + s^{n-\frac{m}{2}-2}t^{\frac{m}{2}}) - 1\cdot (s^{n-\frac{m}{2}-2}t^{\frac{m}{2}} + s^{n-\frac{m}{2}}t^{\frac{m}{2}-2}) = 0,
\end{align*}
thus preserving the degrees of the column relations.

For $l=2$ or $l\ge 4$, Gauss-Jordan elimination shows that $B_1$ is nonsingular, and we do not replace it.

For $l=3$, consider the cases $m=4$ and $m>4$. If $m>4$, switch one of the 0's to the third row:
\[ \text{replace } \left ( \begin{smallmatrix}
0 &  & -\frac{1}{2} &  &  &  &  & &\\
 & 1 &  & -\frac{1}{2} &  &  &  & &\\
-\frac{1}{2} &  & 1 &  & -\frac{1}{2} &  &  & &\\
 & -\frac{1}{2} &  & 1 &  &  &  & &\\
 &  & -\frac{1}{2} &  & 0 &  & -\frac{1}{2} & &\\
 &  &  &  &  & 1 &  & &\\
 &  &  &  & -\frac{1}{2} &  & 0 & & -\frac{1}{2}\\
 &  &  &  &  &  &  & 1 &\\
 &  &  &  &  &  & -\frac{1}{2} &  &...
\end{smallmatrix}\right )\text{ by } 
\left ( \begin{smallmatrix}
0 &  & -\frac{1}{2} &  &  &  &  & &\\
 & 1 &  &  &  &  &  & &\\
-\frac{1}{2} &  & 0 &  & -\frac{1}{2} &  &  & &\\
 &  &  & 1 &  & -\frac{1}{2} &  & &\\
 &  & -\frac{1}{2} &  & 1 &  & -\frac{1}{2} & &\\
 &  &  & -\frac{1}{2} &  & 1 &  & &\\
 &  &  &  & -\frac{1}{2} &  & 0 & & -\frac{1}{2}\\
 &  &  &  &  &  &  & 1 &\\
 &  &  &  &  &  & -\frac{1}{2} &  &...
\end{smallmatrix}\right ).\]

The second matrix is nonsingular, as one can also see by Gauss-Jordan elimination. In terms of the polynomial, this corresponds to taking $F' = F - Q_{2,3} + Q_{4,5}$, and it replaces
\[ \psi_F = ( s^{n-2},\ s^{n-3}t, \ s^{n-4}t^2, \ 0, \ s^{n-6}t^4, \ 0, \ s^{n-8}t^6, \cdots ) \]
by
\[ \psi_F' = ( s^{n-2},\ 0, \ s^{n-4}t^2, \ s^{n-5}t^3, \ s^{n-6}t^4, \ 0, \ s^{n-8}t^6, \cdots ). \]
Note that this only changes the position of one degree $2$ relation, that is, instead of the column relations
\begin{align*}
    & t(s^{n-2}) - s(s^{n-3}t) = 0\\
    & t(s^{n-3}t) - s(s^{n-4}t^2) = 0\\
    & t^2(s^{n-4}t^2) - s^2(s^{n-6}t^4) = 0,
\end{align*}
we get the relations
\begin{align*}
    & t^2(s^{n-2}) - s^2(s^{n-4}t^2) = 0\\
    & t(s^{n-4}t^2) - s(s^{n-5}t^3) = 0\\
    & t(s^{n-5}t^3) - s(s^{n-6}t^4) = 0.
\end{align*}

If $m=4$, the first block of the original matrix $Q$ is
\[ \left (\begin{smallmatrix}
0 &  & -\frac{1}{2} &  & \\
 & 1 &  & -\frac{1}{2} & \\
-\frac{1}{2} &  & 1 &  & -\frac{1}{2}\\
 & -\frac{1}{2} &  & 1 & \\
 &  & -\frac{1}{2} &  & 0
\end{smallmatrix}\right ). \]

We replace it by the nonsingular
\[ \left (\begin{smallmatrix}
0 &  & -\frac{1}{2} & -\frac{1}{2} & -\frac{1}{2}\\
 & 1 & \frac{1}{2} & 0 & \\
-\frac{1}{2} & \frac{1}{2} & 1 &  & -\frac{1}{2}\\
-\frac{1}{2} & 0 &  & 1 & \\
-\frac{1}{2} &  & -\frac{1}{2} &  & 0
\end{smallmatrix}\right ), \]
corresponding to the polynomial $F' = F + Q_{1,3} + Q_{1,4}$. The original map $\psi_F$ is
\[ \psi_F = ( s^{n-2},\ s^{n-3}t, \ s^{n-4}t^4, \ 0, \cdots ) \]
and satisfies the degree $1$ column relations
\begin{align*}
    & t(s^{n-2}) - s(s^{n-3}t) = 0 \\
    & t(s^{n-3}t) - s(s^{n-4}t^2) = 0.
\end{align*}
The new map is
\[ \psi_{F'} = (s^{n-2} + s^{n-3}t + s^{n-4}t^2, \ s^{n-2}+2s^{n-3}t, \ s^{n-2} + s^{n-4}t^2, \cdots) \]
and satisfies the degree $1$ relations
\begin{align*}
    & (-s+3t)(s^{n-2} + s^{n-3}t + s^{n-4}t^2) + (s-t)(s^{n-2}+2s^{n-3}t) - (3t)(s^{n-2} + s^{n-4}t^2) = 0 \\
    & (-s-2t)(s^{n-2} + s^{n-3}t + s^{n-4}t^2) + t(s^{n-2}+2s^{n-3}t) + (s+2t)(s^{n-2} + s^{n-4}t^2) = 0,
\end{align*}
confirming that replacing $F$ by $F'$ does not change the degrees of the column relations.

So far, we have shown that we can replace the first block of the matrix of $F$ by a nonsingular block with column relations in $\psi_F$ of the same degrees. We now need to check the column relations between the first and the other blocks. It is enough to check the relation between the first and the second nonzero block.

Except for the case $l = 3$ and $m = 4$, the last entry of $\psi_F$ corresponding to the first block remains the same, and so do the column relations between the first and the second blocks. When $l = 3$ and $m = 4$, the map $\psi_F$ looks like
\[ \psi_{F} = (s^{n-2}, \ s^{n-3}t, \ s^{n-4}t^2, \ 0, \cdots , 0, \ s^{n-2-b}t^b, \cdots), \]
and the relation between the first and second blocks is the degree $b-2$ relation
\[ t^{b-2}(s^{n-4}t^2) - s^{b-2}(s^{n-2-b}t^b) = 0. \]

It gets replaced by
\[ \psi_{F'} = (s^{n-2} + s^{n-3}t + s^{n-4}t^2, \ s^{n-2}+2s^{n-3}t, \ s^{n-2} + s^{n-4}t^2, \ 0, \cdots , 0, \ s^{n-2-b}t^b, \cdots), \]
and we have the relation between the first and the second blocks
\[ t^{b-2}(2(s^{n-2} + s^{n-3}t + s^{n-4}t^2) - (s^{n-2}+2s^{n-3}t) - (s^{n-2} + s^{n-4}t^2)) - s^{b-2}(s^{n-2-b}t^b) = 0, \]
also of degree $b-2$.

Therefore, the new matrix we obtain by replacing the first block induces column relations of the same degree as in the original matrix. Hence, we still get $N_{C/X}\cong \bigoplus _{i=1}^{n-2}\cO(n+2-a_i)$. The rank of the new matrix is only decreased by the $(a_i-3)\times (a_i-3)$ zero diagonal blocks, thus it has corank $\sum _{a_i\ge 4}(a_i-3)$.

\medskip

Now, let $e<n$. The $F$ constructed in Theorem \ref{quadricprop1} is $F = \sum_{i=0}^{e-2-y}Q_{\beta_i+1,\beta_i+2} + \sum_{j=e+1}^n x_{\gamma_j}x_j$. The quadratic form matrix of $F$ can be seen as a four blocks symmetric matrix
\[ M = \begin{pmatrix}
Q & A \\
A^t & 0
\end{pmatrix} \]
where $Q$ is the $(e+1)\times (e+1)$ matrix corresponding to the quadratric form $\sum_{i=0}^{e-2-y}Q_{\beta_i+1,\beta_i+2}$ in $\P^e = V(x_{e+1}, \cdots , x_n)$. Thus, by applying the case $e=n$ to the matrix $Q$, we may assume $Q$ to be a corank $\sum _{a_i\ge 4}(a_i-3)$ matrix. In other words, we are able to replace $Q$ by a corank $\sum _{a_i\ge 4}(a_i-3)$ matrix with the same normal bundle $N_{C/X}$.

The $0$ block of $M$ corresponds to the terms $x_lx_j$ of $F$ with $l,j\ge e+1$. These terms are $0$ when restricted to $C$, and therefore do not interfere with the map $\psi_F$. Thus, we can replace the $0$ block by any matrix $L$ without changing $\psi_F$ and hence preserving $N_{C/X}$. Therefore, all matrices
\[ M = \begin{pmatrix}
Q & A \\
A^t & L
\end{pmatrix} \]
with any $L$ induce the same $N_{C/X}$. To compute its rank, assume $L$ is invertible, let $I$ be the identity matrix, and use Schur complement:
\[ \begin{pmatrix}
I & -AL^{-1} \\
0 & I
\end{pmatrix}
\begin{pmatrix}
Q & A \\
A^t & L
\end{pmatrix}
\begin{pmatrix}
I & 0\\
-L^{-1}A^t & I
\end{pmatrix} = 
\begin{pmatrix}
Q-AL^{-1}A^t & 0 \\
0 & L
\end{pmatrix}. \]

Since 
\[ \begin{pmatrix}
I & -AL^{-1} \\
0 & I
\end{pmatrix} \ \ \text{ and } \ \ \begin{pmatrix}
I & 0\\
-L^{-1}A^t & I
\end{pmatrix} \]
are invertible, it follows that
\[ \rk \begin{pmatrix}
Q & A \\
A^t & L
\end{pmatrix} = \rk \begin{pmatrix}
Q-AL^{-1}A^t & 0 \\
0 & L
\end{pmatrix} = \rk (Q-AL^{-1}A^t) + \rk (L). \]

And since matrices of rank at least $\rk (Q)$ form an open neighborhood of $Q$, we can choose an invertible matrix $L$ such that $\rk (Q-AL^{-1}A^t)\ge \rk (Q)$, and therefore, $\rk M\ge \rk(Q) + \rk(L)$, that is, the corank of $M$ is at most $\sum _{a_i\ge 4}(a_i-3)$.
\medskip

(b) The polynomial $F$ obtained in Theorem \ref{quadricprop1} is $F = \sum_{j=e+1}^n x_{\gamma_j}x_j$, whose quadratic form matrix is
\[ M = \begin{pmatrix}
0 & A \\
A^t & 0
\end{pmatrix}. \]
The matrix $A$ corresponds to the terms $x_{\gamma_j}x_j$ of $F$. Notice, from the construction of $F$, that $0=\gamma_{e+1}\le \gamma_{e+2}\le \cdots \le \gamma_{n-1}\le \gamma_n=e$, so the rank of $A$ equals the number of different values of $\gamma_j$. Since $b_j = 0\Leftrightarrow \gamma_j = \gamma_{j-1}$, the values of $\gamma_j$ repeat exactly $w$ times. Thus, there are at most $\mathrm{min}\{e+1, n-e-w\}$ different values of $\gamma_j$, so $\rk(A)=\mathrm{min}\{e+1, n-e-w\}$.

As in part (a), we notice that we can add any terms $x_lx_j$ with $l,j\ge e+1$ to $F$ without changing $\psi_F$, thus preserving $N_{C/X}$. So, we can replace $M$ by
\[ M = \begin{pmatrix}
0 & A \\
A^t & L
\end{pmatrix} \]
for any matrix $L$. In particular, let $L=I$ be the identity matrix, and by Schur complement as above, we have
\[ \rk(M) = \rk(0-AIA^t)+\rk (I) = \rk(A) + \rk (I) = \mathrm{min}\{e+1, n-e-w\} + (n-e). \]
Therefore, the corank of $M$ is $e+1 - \mathrm{min}\{e+1, n-e-w\}$.
\end{proof}

Not every possible splitting type, however, comes from a smooth quadric $X$. We can get examples from splitting types of the form $E'$.

\begin{example}
Let $e=3$ and $n=5$. Then there is no smooth quadric $X$ such that $N_{C/X}$ has splitting type $\cO(5)^2\oplus \cO$.
\end{example}
As we have seen in the proof of Theorem \ref{quadricprop1}, any quadric $X$ with this splitting type must be of the form $F = L_4x_4 + L_5x_5$ where $L_4, L_5$ are linear forms in the variables $x_0, \hdots , x_5$. Its quadratic form matrix is a $6\times 6$ matrix of the form
\[ M = \begin{pmatrix}
0 & A \\
A^t & L
\end{pmatrix}. \]
Since $A$ is a $2\times 4$ matrix, $\rk(A)\le 2$, thus, as above, $\rk (M)\le \rk (A) + \rk(L)\le 4$.

\printbibliography[title={References}]

\end{document}